\newcommand{\C}{\mathbb{C}}
\newcommand{\F}{\mathcal{F}}
\renewcommand{\H}{\mathcal{H}}
\newcommand{\N}{\mathbb{N}}
\newcommand{\PP}{\mathbb{P}}
\newcommand{\R}{\mathbb{R}}
\newcommand{\Z}{\mathbb{Z}}
\DeclareMathOperator{\id}{Id}
\let\div\relax
\DeclareMathOperator{\div}{div}
\renewcommand{\epsilon}{\varepsilon}
\newcommand{\obar}[1]{\overline{#1}}
\DeclareMathOperator{\imm}{i}
\newcommand{\de}{\partial}
\newcommand{\set}[1]{\left\{#1\right\}}
\newcommand{\pa}[1]{\left(#1\right)}
\newcommand{\abs}[1]{\left|#1\right|}
\newcommand{\norm}[1]{\left\|#1\right\|}
\newcommand{\brak}[1]{\left\langle#1\right\rangle}
\newcommand{\expt}[1]{\mathbb{E}\left[#1\right]}
\def\Xint#1{\mathchoice
	{\XXint\displaystyle\textstyle{#1}}%
	{\XXint\textstyle\scriptstyle{#1}}%
	{\XXint\scriptstyle\scriptscriptstyle{#1}}%
	{\XXint\scriptscriptstyle\scriptscriptstyle{#1}}%
	\!\int}
\def\XXint#1#2#3{{\setbox0=\hbox{$#1{#2#3}{\int}$}
		\vcenter{\hbox{$#2#3$}}\kern-.5\wd0}}
\def\dashint{\Xint-}
\newtheorem{thm}{Theorem}[section]
\newtheorem{definition}[thm]{Definition}
\newtheorem{cor}[thm]{Corollary}
\newtheorem{lem}[thm]{Lemma}
\newtheorem{prop}[thm]{Proposition}
\theoremstyle{remark}
\newtheorem{rmk}[thm]{Remark}
\numberwithin{equation}{section}
\title[Equilibrium Statistical Mechanics Barotropic Quasi-Geostrophic]{Equilibrium Statistical Mechanics\\of Barotropic Quasi-Geostrophic Equations}
\author[F. Grotto]{Francesco Grotto}
  \address{Scuola Normale Superiore, Piazza dei Cavalieri, 7, 56126 Pisa, Italia}
  \email{\href{mailto:francesco.grotto@sns.it}{francesco.grotto@sns.it}}
\author[U. Pappalettera]{Umberto Pappalettera}
  \address{Scuola Normale Superiore, Piazza dei Cavalieri, 7, 56126 Pisa, Italia}
  \email{\href{mailto:umberto.pappalettera@sns.it}{umberto.pappalettera@sns.it}}
\keywords{quasi-geostrophic equations, channel flow, equilibrium statistical mechanics, weak vorticity formulation}
\date\today
\begin{document}

\begin{abstract}
 We consider equations describing a barotropic inviscid flow in a channel 
 with topography effects and beta-plane approximation
 of Coriolis force, in which a large-scale mean flow interacts with smaller scales.
 Gibbsian measures associated to the first integrals energy and enstrophy are Gaussian measures supported
 by distributional spaces. 
 We define a suitable weak formulation for barotropic equations, and prove 
 existence of a stationary solution preserving Gibbsian measures,
 thus providing a rigorous infinite-dimensional
 framework for the equilibrium statistical mechanics of the model.
\end{abstract}

\maketitle

\section{Introduction}\label{sec:introduction}

Barotropic quasi-geostrophic equations in channel domains constitute a physically
relevant partial differential equation in oceanography and atmospheric modeling,
with applications including for instance the Antarctic circumpolar current.
Significance of the model is discussed for instance in \cite{GrMu96,ChZMGh03,MaWa06,DiSeSheWa15} and references therein,
to which we refer.

The presence of conserved quantities and their associated equilibrium statistical mechanics constitute an
important feature of the model, and our work will focus on invariant measures and stationary solutions.
Although numerical reasons naturally lead to consider Fourier truncated or other approximations
of the stationary flow, as for instance in \cite[Section 6]{MaTiVE01}, the full infinite-dimensional
setting is of great interest because of its geophysical relevance and mathematical difficulty,
as discussed in \cite{MaWa06}. The latter monography thoroughly discusses in Chapter 8
equations for fluctuations around the mean state for the truncated model,
and then considers a continuum limit by scaling parameters of invariant measures so to
neglect fluctuations, obtaining a mean state description for the PDE model.

Our contribution in a sense furthers their study: we will show how fluctuations can be included
in the continuum limit by defining a suitably weak notion of solution,
so to include the distributional regimes dictated by the full infinite-dimensional invariant measure,
under which fluctuations of comparable order are observed at all scales.

The model under consideration, for the derivation of whom we refer to \cite[Chapter 1]{MaWa06}, is the following. 
We consider the rectangle $R=[-\pi,\pi]\times [0,\pi]$ as a space domain,
and denote $z=(x,y)\in R$ its points; we also fix a finite interval for time $t\in [0,T]$.
The governing dynamics is the inviscid quasi-geostrophic equation for the scalar \emph{potential vorticity}
$q(t,z)$,
\begin{equation}\label{eq:activescalar}
\de_t q+ \nabla^\perp \psi \cdot \nabla q=0,
\end{equation}
where $\nabla^\perp=(-\de_y,\de_x)$, and $\psi(t,z)$ is the \emph{stream function} determining
the divergence-less velocity field $\nabla^\perp\psi$.
The channel geometry prescribes that velocity $\nabla^\perp \psi$ be tangent to the top and bottom
boundaries of $R$, and we further assume the flow to be periodic in the $x$ coordinate.
Such boundary conditions are encoded in terms of $\psi$ as follows:
\begin{align}\label{eq:boundary1}
&\de_x\psi(t,x, \pi)=\de_x\psi(t,x, 0)=0,\\ \label{eq:boundary2}
&\nabla^\perp\psi(t,x+2 \pi, y)=\nabla^\perp\psi(t,x, y).
\end{align}
As a consequence, at fixed $t$ the stream function $\psi$ is constant on the impermeable boundaries $y=0,\pi$.
Using the fact that $\psi$ is defined up to an additive constant,
possibly depending on time, we will set $\psi(t,x,0)\equiv 0$,
from which it is easily seen that $\psi$ takes the form
\begin{equation*}
\psi=-V y+\psi',
\end{equation*}
with $V(t)$ a function of time only describing a large-scale mean flow,
and $\psi'(t,z)$ the scalar \emph{small-scale stream function}, periodic in $x$ and null at $y=0,\pi$ at all times.
Potential vorticity is then linked to $\psi'$ by
\begin{equation}\label{eq:poisson}
q=\Delta \psi'+h+\beta y,
\end{equation}
where $h(z)$ is a smooth scalar function modelling the effect of the underlying topography on the fluid,
and $\beta y$, $\beta\in\R$, is the beta-plane approximation of Coriolis' force.

Dynamics of $V(t)$ is derived by imposing conservation of \emph{total energy},
\begin{equation}
E=\frac12\dashint_R|\nabla^\perp\psi|^2dxdy=\frac12 V^2+\frac12\dashint_R|\nabla^\perp\psi'|^2dxdy,
\end{equation}
from which one obtains an equation for time evolution of the mean flow,
\begin{equation}\label{eq:bc}
\frac{d V}{dt}=-\dashint_R\de_x h(z) \psi'(z) dz,
\end{equation}
the right-hand side being usually referred to as \emph{topographic stress}. This last relation
completes our set of equations,
\begin{equation}\label{eq:bqg}\tag{BQG}
\begin{cases}
\de_t q+ \nabla^\perp \psi \cdot \nabla q=0,\\
q=\Delta \psi' +h+\beta y,\\
\psi=-V y+\psi',\\
\frac{d V}{dt}=-\dashint_R\de_x h(z) \psi'(z) dz.
\end{cases}
\end{equation}
Since both $\psi$ and $\psi'$ can be recovered from $V$ and $q$, taking into account the boundary conditions 
\eqref{eq:boundary1}, \eqref{eq:boundary2} in solving Poisson's equation \eqref{eq:poisson}, 
we will consider $(V,q)$ as the state variables of the system. 
This particular choice has the advantage of retaining the active scalar form
for the dynamics \eqref{eq:activescalar} of $q$.

Our study focuses on equilibrium statistical mechanics of \eqref{eq:bqg} in the full infinite dimensional setting,
generalising the well-established theory developed for 2-dimensional Euler equations.
Besides the total energy $E$, \eqref{eq:bqg} preserve the \emph{large-scale enstrophy}
\begin{equation}
Q(V,q)=\beta V+\frac12 \dashint_R (q-\beta y)^2.
\end{equation}
Due to the Hamiltonian nature of the fluid dynamics, it is thus expected that the Gibbsian ensembles
\begin{equation}\label{eq:gibbsianmeasures}
d\nu_{\alpha,\mu}(V,q)=\frac1{Z_{\alpha,\mu}} e^{-\alpha(\mu E(V,q)+Q(V,q))}dVdq, \quad \alpha,\mu>0,
\end{equation}
are invariant measures for \eqref{eq:bqg}.
Since Boltzmann's exponents are quadratic functionals of the state variables $(V,q)$, these
are Gaussian measures. Unfortunately, they are only supported on spaces of distributions
--they give null mass to any space of functions-- so some effort is required to give
meaning to the dynamics \eqref{eq:bqg} in the low-regularity regime dictated by $\nu_{\alpha,\mu}$.

Inspired by works on Euler's equations, \cite{AlCr90,Fl18}, we describe a weak formulation of
\eqref{eq:bqg} robust enough to admit samples of $\nu_{\alpha,\mu}$ as fixed-time distributions,
and then produce by means of a Galerkin approximation scheme such a solution,
arriving at our main result:

\begin{thm}\label{thm:maintheorem}
	Let $\beta\neq 0$ and $h$ as above.
	For any $\alpha,\mu>0$ there exists a stationary stochastic process 
	$(V_t,q_t)_{t\in [0,T]}$ with fixed-time marginals $\nu_{\alpha,\mu}$,
	whose trajectories solve \eqref{eq:bqg} in the weak vorticity formulation of \autoref{def:bqgvorticity}.
\end{thm}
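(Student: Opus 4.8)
The plan is to realise the stationary process as a limit of finite-dimensional stationary processes coming from a Galerkin truncation that \emph{exactly} preserves the invariant measure, and then to pass to the limit inside the weak vorticity formulation of \autoref{def:bqgvorticity}. First I would fix the orthonormal basis of eigenfunctions of $-\Delta$ on $R$ compatible with the boundary conditions \eqref{eq:boundary1}, \eqref{eq:boundary2} --- concretely the modes $\sin(k y)\,e^{\imm m x}$, $k\geq 1$, $m\in\Z$ --- and let $\Pi_N$ be the spectral projection onto the first $N$ of them. The truncated system replaces $q$ by $q^N=\Pi_N q$, the transport term by $\Pi_N\pa{\nabla^\perp\psi^N\cdot\nabla q^N}$, and recovers $\psi'^N$ from $q^N$ through Poisson's equation \eqref{eq:poisson} with the channel boundary data, while keeping the mean-flow law \eqref{eq:bc} intact on the truncated subspace. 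This is a finite-dimensional system of ODEs in the state $\pa{V,\{q_k\}_{k\leq N}}$.

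The two structural facts that make the truncation work are, first, that the truncated energy $E$ and large-scale enstrophy $Q$ remain conserved: the spectral projection is symmetric, the truncated transport retains the antisymmetry guaranteeing that its pairings against $\psi^N$ and against $q^N-\beta y$ cancel, and the topographic-stress term in \eqref{eq:bc} is precisely what compensates the residual $\beta$-contribution to $\frac{d}{dt}Q$; second, that the truncated vector field is divergence-free in phase space, since the spectral transport coefficients are antisymmetric and the mean-flow equation does not depend on $V$ itself, so Liouville's theorem applies. Because $\alpha(\mu E+Q)$ is the (conserved, quadratic-plus-affine) Boltzmann exponent, the Gaussian measure $\nu^N_{\alpha,\mu}=\pa{\Pi_N}_*\nu_{\alpha,\mu}$ is invariant for the truncated flow. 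Sampling the datum from $\nu^N_{\alpha,\mu}$ and solving the Galerkin ODE then yields, for each $N$, a stationary process $(V^N_t,q^N_t)_{t\in[0,T]}$ with fixed-time law $\nu^N_{\alpha,\mu}$.

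The heart of the argument, and the step I expect to be the main obstacle, is to control the quadratic transport nonlinearity uniformly in $N$ and to give it meaning in the limit, where $q$ lives only in the negative-regularity space ($H^{-1-\epsilon}$-type) carrying $\nu_{\alpha,\mu}$. Testing the transport equation against a smooth $f$ and writing $\psi'=\Delta^{-1}\pa{q-h-\beta y}$ with the Green kernel $K$ for $-\Delta$ under \eqref{eq:boundary1}, \eqref{eq:boundary2}, the nonlinear term becomes a bilinear expression in $q$ which, after symmetrising the two vorticity arguments, is integrated against a kernel $H_f(z,w)$ of Delort--Schochet type. The decisive point is that the singularity of $\nabla^\perp K$ cancels upon symmetrisation against the increment $\nabla f(z)-\nabla f(w)$, so $H_f$ is bounded; hence the renormalised functional $\int\!\int H_f(z,w)\,\wick{q\otimes q}$ is a well-defined element of the second homogeneous chaos of $\nu_{\alpha,\mu}$, with $L^2(\nu_{\alpha,\mu})$ norm finite and independent of $N$. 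The linear beta-plane and topography contributions, and the coupling through $V$ (which enters transport only via the term $V\,\de_x q$, skew-symmetric in the basis), are lower order and handled directly.

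With the nonlinear term under uniform control, tightness follows. Stationarity supplies uniform-in-$N$ bounds on the law of $q^N_t$ in the support space of $\nu_{\alpha,\mu}$ and on that of $V^N_t$; meanwhile the weak formulation writes $\brak{q^N_t,f}-\brak{q^N_0,f}$ as the time integral of the uniformly $L^2(\nu_{\alpha,\mu})$-bounded nonlinear functional, which gives equicontinuity in time of the real-valued processes $t\mapsto\brak{q^N_t,f}$ and hence tightness of the path laws in $C\pa{[0,T];H^{-s}}$ for $s$ large enough, jointly with tightness of $(V^N_t)$. By Prokhorov and Skorokhod I would extract an almost surely convergent subsequence; passage to the limit in the linear terms is immediate, and in the renormalised nonlinear term it follows from the continuity of the bounded-kernel second-chaos functional under the relevant convergence. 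The resulting limit process has fixed-time marginal $\nu_{\alpha,\mu}$, is stationary by construction, and satisfies the weak vorticity formulation, which proves the theorem.
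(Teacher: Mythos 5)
Your proposal follows essentially the same route as the paper: a spectral Galerkin truncation whose divergence-free (Liouville) structure preserves the projected Gibbs measure, a renormalised second-chaos (Wick/It\^o--Wiener) definition of the quadratic transport term with $L^2$ bounds uniform in the truncation, tightness of the stationary approximations, and a Prokhorov--Skorokhod passage to the limit in the weak vorticity formulation. The only notable variations are cosmetic relative to this shared architecture: you justify the kernel regularity by the pointwise Delort--Schochet cancellation (claiming $H_f$ bounded), whereas the paper works in Fourier variables and proves exactly what is needed, namely $H_\phi\in\H^0(D\times D)$ in \autoref{prop:HphiL2} (with \autoref{rmk:Hsin} showing the regularity stays below $\H^1$, so a pathwise definition is impossible either way), and you keep the uncentred variables $(V,q)$ while the paper passes to centred variables $(U,\omega)$ as in \eqref{eq:centering} before running the same argument.
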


The reader should be aware that, although we often make use of terminology from Probability theory,
there is no noise or external randomness acting on the system under consideration,
and we are only referring to the fact that an invariant measure of a deterministic evolution
can be regarded as a random initial data producing a stationary (deterministic) process.

As in the case of 2-dimensional Euler's equations in the Energy-Enstrophy stationary regime,
or more generally when fixed time marginals are absolutely continuous with respect to space white noise, see \cite{FlGrLu19},
uniqueness remains an important open problem. We will not discuss uniqueness of solutions of \eqref{eq:bqg}
in the above stationary regime; thus, in particular, we are not able to state that the solutions
we produce form a flow, \emph{i.e.} a one-parameter group of transformations of phase space indexed by time.

The present paper is structured as follows. In \autoref{sec:definitions} we collect some
preliminary material, including a short discussion on regularity regimes in which \eqref{eq:bqg} are well-posed.
In \autoref{sec:weaksolutions} we thoroughly discuss the formulation of weak solution
required by our low-regularity setting, and finally in \autoref{sec:galerkin} we will prove
\autoref{thm:maintheorem} by approximating the infinite-dimensional stationary solution
with finite-dimensional, stationary Galerkin truncations of \eqref{eq:bqg}.

\section{Definitions and Preliminary Results}\label{sec:definitions}

We consider mixed boundary conditions on $R$ for the small scale stream function $\psi'$,
that is periodicity in the $x$ variable and Dirichlet boundary at $y=0,\pi$.
In order to simplify Fourier analysis, let us extend the space domain to the 2-dimensional torus
$D=[-\pi,\pi]^2$ with periodic boundary conditions on both $x,y$ variables, extending $\psi'$
to $D$ so that it becomes an odd function of $y$. We still denote points $z=(x,y)\in D$.

To study \eqref{eq:bqg} on $D$ we also extend $q,h$ in the same way; the extension of $h$
might be discontinuous at $y=0$, but this will not be relevant in the following.
Indeed, it is not difficult to see that equations \eqref{eq:bqg} preserve such condition.
We also remark that the beta-plane term $\beta y$ of \eqref{eq:poisson} is coherent with the
domain extension.

Due to the (skew-)symmetry in $y$ variable, it will be convenient to introduce 
the following set of orthonormal functions of $L^2(D,\C)$,
\begin{gather*}
(e_j)_{j \in \Z}, \pa{e_j s_k , e_j c_k}_{(j,k) \in \Lambda} \quad  \Lambda = \left\{ (j,k) : j\in \Z, k \in \N\setminus \{0\},  \right\}\\
e_j(x) = \frac{1}{2\pi}e^{\imm jx}, \,\, s_k(y) = \sin(ky), \,\, c_k(y) = \cos(ky).
\end{gather*}
Since we work with real valued objects, Fourier coefficients relative to modes 
$(j,k)$ and $(-j,k)$ will always be complex conjugated. 
With this relation between Fourier coefficients, $\left\{e_j, e_j s_k , e_j c_k \right\}_{(j,k) \in \Lambda}$ 
is a Hilbert basis of $L^2=L^2(D,\R)$.

Odd functions of $y$ only have non null Fourier coefficients relative to $\pa{e_js_k}_{(j,k) \in \Lambda}$:
we will denote those coefficients, say of $\psi'$, by
\begin{equation*}
\F_{j,k}(\psi')=\hat{\psi}'_{j,k}=\int_D \psi'(x,y) e_{-j}(x)s_k(y)dxdy.
\end{equation*}
so that
\begin{equation*}
\psi'(x,y) = \sum_{(j,k) \in \Lambda} \hat{\psi}'_{j,k} e_j(x)s_k(y), 
\quad \hat{\psi}'_{j,k} = \overline{\hat{\psi}'_{-j,k}}.
\end{equation*} 

For $\alpha\in\R$, we denote by $H^\alpha=W^{\alpha,2}(D,\R)$ the $L^2(D,\R)$-based Sobolev spaces, 
which enjoy the compact embeddings $H^\alpha\hookrightarrow H^\beta$ whenever $\beta<\alpha$,
the injections being furthermore Hilbert-Schmidt if $\alpha>\beta+1$.
The scale of Sobolev spaces of odd distributions in $y$,
\begin{align*}
\H^\alpha = \set{u=\sum_{(j,k) \in \Lambda} \hat u_{j,k} e_js_k:
	\norm{u}^2_{\H^\alpha}=\sum_{(j,k) \in \Lambda} \abs{\hat u_{j,k}}^2 (j^2+k^2)^{2\alpha}<\infty},
\end{align*}
clearly share the same properties. We denote with $\H^0$ the subspace of odd functions of $y$ in $L^2(D)$,
and more generally each $\H^\alpha$ is a closed subspace of $H^\alpha$. Brackets $\brak{\cdot,\cdot}$ will denote $\H^0$-based duality couplings. 

As a convention, $C$ will denote a positive constant, possibly changing in every occurrence even in the same formula
and depending only on its eventual subscripts.

\subsection{Well-posedness regimes}

Our main aim is to give meaning to \eqref{eq:bqg} in distributional regimes
dictated by the formally invariant Gibbs measures. Before we undertake that task, 
we briefly discuss, for the sake of completeness, more regular regimes in which 
our equations are actually well-posed.
Let us begin by introducing the notion of weak solution.

\begin{definition}\label{bqgweakdef}
	Given $(V_0,q_0)\in \R\times L^\infty(D)$, we say that
	\begin{equation*}
	(V(t),q(t))_{t\in [0,T]}\in L^\infty([0,T], \R\times D)
	\end{equation*}
	is a weak solution to \eqref{eq:bqg} with initial datum $(V_0,q_0)$ if for any 
	$\varphi\in C^1([0,T]\times D)$ it holds
	\begin{align}\label{eq:weakformulationq}
	&\int_D \varphi(T,z) q(T,z) dz-\int_D \varphi(0,z) q_{0}(z) dz\\ \nonumber
	&\qquad =\int_{0}^{T} \int_D (\partial_t\varphi(s,z)+\nabla^\perp\psi(s,z)\cdot \nabla\varphi(s,z)) q(s,z)dz d s,\\
	\label{eq:weakformulationV}
	&V(t)=V_0+\int_0^t \dashint_D h(z) \de_x\psi'(z,s) dz ds,\\
	\label{eq:weakformother}
	&\psi=-V y+\psi', \quad q=\Delta\psi'+h+\beta y.
	\end{align}
\end{definition}

Thanks to the fact that the equation for $q$ is in the active scalar form,
the method of characteristics produces an existence result:
a minor modification of the proof of \cite[Ch.2,Theorem 3.1]{MaPu94} leads to the following:

\begin{prop}\label{thm:regularexistence}
	Let $(V_0,q_0)\in \R\times L^\infty(D)$, and consider the Lagrangian formulation of \eqref{eq:bqg}
	given by
	\begin{gather}\label{eq:cauchyflow}
	\begin{cases}
	\frac{d}{dt}\phi_t(z)= \nabla^\perp \psi(t,\phi_t(z))\\
	\phi_0(z)=z
	\end{cases},\quad q(t,z)=q_0(\phi_{-t}(z)),
	\end{gather}
	together with equations \eqref{eq:weakformulationV},\eqref{eq:weakformother}.
	There exists a unique solution $(\phi,V,q)$ of such system, 
	and moreover $(V,q)$ is a weak solution of \eqref{eq:bqg} in the sense of \autoref{bqgweakdef}.
\end{prop}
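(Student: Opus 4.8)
The plan is to follow the classical method-of-characteristics argument for two-dimensional Euler with bounded vorticity, as in \cite[Ch.2, Theorem 3.1]{MaPu94}, treating the mean-flow coupling as a benign perturbation. The unknowns are the pair $(\phi,V)$, from which $q,\psi',\psi$ are reconstructed: given a flow $\phi$ and a time-dependent scalar $V$, set $q(t,z)=q_0(\phi_{-t}(z))$, recover $\psi'(t)=\Delta^{-1}(q(t)-h-\beta y)$ by solving the Poisson problem \eqref{eq:poisson} with the mixed boundary conditions (equivalently, on $D$, by inverting the Laplacian on the odd-in-$y$ sector), and set $\psi=-Vy+\psi'$. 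The map then sends $(\phi,V)$ to the flow generated by the velocity $\nabla^\perp\psi$ together with the updated $V$ given by \eqref{eq:weakformulationV}. A fixed point of this map is precisely a solution of \eqref{eq:cauchyflow}, \eqref{eq:weakformulationV}, \eqref{eq:weakformother}.

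First I would record the a priori bounds that make the scheme close. Since $\nabla^\perp\psi$ is divergence-free and tangent to the boundary, the flow $\phi_t$ is a measure-preserving homeomorphism of $D$; hence $q(t,z)=q_0(\phi_{-t}(z))$ satisfies $\norm{q(t)}_{L^\infty}=\norm{q_0}_{L^\infty}$ for all $t$. Consequently $q(t)-h-\beta y$ is bounded uniformly in $t$, and standard elliptic potential estimates give that $\nabla\psi'(t)$ is \emph{log-Lipschitz} in space, with modulus controlled by $\norm{q_0}_{L^\infty}$ and $\norm{h}_{L^\infty}$. The mean-flow contribution $-Vy$ only adds the spatially uniform velocity $(V,0)$ to $\nabla^\perp\psi$, which is smooth in space and therefore does not degrade this regularity; moreover the right-hand side of \eqref{eq:weakformulationV} is bounded by $C\norm{q_0}_{L^\infty}$ via $\norm{\psi'}_{L^\infty}\le C\norm{q_0-h-\beta y}_{L^\infty}$, so $V$ stays Lipschitz in time uniformly along the iteration.

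Next come uniqueness and existence. The log-Lipschitz bound yields an Osgood modulus of continuity for the velocity field, which gives uniqueness of the flow: if $\phi,\tilde\phi$ are two solutions, then $\delta(t)=\sup_z\abs{\phi_t(z)-\tilde\phi_t(z)}$ obeys a differential inequality of the form $\delta'(t)\le C\,\delta(t)\log(1/\delta(t))$, whose only solution with $\delta(0)=0$ is $\delta\equiv 0$; the extra $(V,0)$ term contributes only the difference $\abs{V-\tilde V}$, which is absorbed by a Gronwall argument using \eqref{eq:weakformulationV}. Existence follows by successive approximations: one iterates the map above starting from $(\phi^{(0)}_t=\id,\,V^{(0)}\equiv V_0)$, and the uniform log-Lipschitz and Lipschitz-in-time bounds provide the equicontinuity needed to extract, via Ascoli--Arzel\`a, a convergent subsequence whose limit is a fixed point. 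I expect the coupling between the transport of $q$ and the ODE for $V$ to be the only genuinely new point relative to \cite{MaPu94}: one must check that the two equations can be solved simultaneously, but since $V$ enters the velocity only through the smooth term $(V,0)$ and its ODE has a bounded, Lipschitz-in-$\psi'$ right-hand side, the coupling is a lower-order, spatially regular perturbation that leaves the Osgood structure intact.

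Finally I would verify that the Lagrangian solution is weak in the sense of \autoref{bqgweakdef}. Equation \eqref{eq:weakformulationV} holds by construction, and \eqref{eq:weakformother} is the very definition of $\psi',\psi$. For \eqref{eq:weakformulationq}, I would fix $\varphi\in C^1([0,T]\times D)$ and differentiate $t\mapsto\int_D\varphi(t,z)q(t,z)\,dz$; writing $q(t)=q_0\circ\phi_{-t}$ and changing variables $z=\phi_t(w)$ turns this into $\int_D\varphi(t,\phi_t(w))q_0(w)\,dw$, whose time derivative is $\int_D(\de_t\varphi+\nabla^\perp\psi\cdot\nabla\varphi)(t,\phi_t(w))\,q_0(w)\,dw$ by the flow equation \eqref{eq:cauchyflow} and the chain rule. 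Changing variables back and integrating in time yields exactly \eqref{eq:weakformulationq}, and a routine mollification of $q_0$ by smooth data justifies these manipulations in the $L^\infty$ regime.
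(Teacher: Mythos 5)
Your proposal is correct and takes essentially the same route as the paper: the paper's proof is exactly the ``minor modification'' of \cite[Ch.2, Theorem 3.1]{MaPu94} you outline, resting on the BMO bound for $\nabla\nabla^\perp\Delta^{-1}(q-h-\beta y)$, hence a log-Lipschitz velocity satisfying the Osgood condition, with the mean-flow coupling entering only through the spatially uniform term and the weak formulation \eqref{eq:weakformulationq} checked by the same change-of-variables computation along the measure-preserving flow. The only caveat, which the Lagrangian iteration of \cite{MaPu94} (and your fixed-point setup) already accommodates, is that in the uniqueness step the two solutions carry different velocity fields through $q=q_0\circ\phi_{-t}$, so the inequality $\delta'\le C\,\delta\log(1/\delta)$ requires the kernel estimate in both arguments rather than a naive application of Osgood uniqueness for a single field.
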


The argument ultimately relies on the fact that $\nabla\nabla^\perp\Delta^{-1}$ 
is a singular kernel of Calder\'on-Zygmund type, so that its associated convolution operator is a bounded linear map from $L^\infty(D)$ to the Bounded Mean Oscillation (BMO) space.
This implies that the vector field
\begin{equation*}
\nabla^\perp \psi= V\binom{0}{1}+\nabla^\perp \Delta^{-1}(q-h-\beta y)
\end{equation*}
has gradient in BMO, and thus it is log-Lipschitz (\emph{cfr.} \cite[Ch.2,Lemma 3.1]{MaPu94}).
The vector field $\nabla^\perp \psi$ then satisfies the Osgood condition (\cite{Os98}) for the associated Cauchy problem \eqref{eq:cauchyflow}, which is thus well-posed; 
it is not difficult to check that $q(t,z)=q_0(\phi_{-t}(z))$ satisfies the weak formulation \eqref{eq:weakformulationq}.
All these ideas date back to the celebrated work of Judovi\v{c}, \cite{Ju63}, concerning well-posedness
of Euler equations for initial vorticity in $L^\infty$.

\begin{prop}\label{thm:regularuniqueness}
	For any $(V_0,q_0)\in \R\times L^\infty(D)$,
	the weak solution of \eqref{eq:bqg} in the sense of \autoref{bqgweakdef} is unique.
\end{prop}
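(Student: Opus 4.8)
The plan is to run a Judovi\v c-type energy estimate, adapted to the coupling with the mean flow $V$. Suppose $(V_1,q_1)$ and $(V_2,q_2)$ are two weak solutions in the sense of \autoref{bqgweakdef} with the same initial datum $(V_0,q_0)$, and write $\chi=\psi_1'-\psi_2'$, $w=q_1-q_2=\Delta\chi$, and $v=V_1-V_2$, so that $\psi_1-\psi_2=-vy+\chi$. Since each solution lies in $L^\infty([0,T],L^\infty(D))$, set $M=\max_i\sup_t\norm{q_i(t)}_{L^\infty}$; in fact the transport structure of \eqref{eq:weakformulationq} forces $\norm{q_i(t)}_{L^\infty}=\norm{q_0}_{L^\infty}$. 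I would monitor
\begin{equation*}
\mathcal E(t)=\tfrac12\norm{\nabla\chi(t)}_{L^2}^2+\tfrac12\, v(t)^2,
\end{equation*}
which vanishes at $t=0$, and show it satisfies an Osgood differential inequality forcing $\mathcal E\equiv 0$ on $[0,T]$; this gives $V_1=V_2$ and, after using \eqref{eq:boundary1}--\eqref{eq:boundary2} to fix the additive constant, $\psi_1'=\psi_2'$, hence $q_1=q_2$.

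First I would differentiate $\mathcal E$. Using $w=\Delta\chi$, the difference of the two instances of \eqref{eq:weakformulationq}, and the difference of \eqref{eq:weakformulationV}, one obtains (formally) $\frac{d}{dt}\mathcal E=T_1+T_2+T_3$, where, after integrating by parts and discarding the terms that vanish by $\div\nabla^\perp\psi_1=0$, the dangerous contribution is $T_1=\sum_{i,j}\int_D (\de_i b_j)\,\de_i\chi\,\de_j\chi\,dz$ with $b=\nabla^\perp\psi_1$, while $T_2=\int_D\chi\,(\nabla^\perp\psi_1-\nabla^\perp\psi_2)\cdot\nabla q_2\,dz$ and $T_3$ collects the mean-flow equation. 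The two mean-flow terms are harmless: since $\nabla^\perp\chi\cdot\nabla\chi=0$ pointwise, in $(\nabla^\perp\psi_1-\nabla^\perp\psi_2)\cdot\nabla\chi=\nabla^\perp(-vy+\chi)\cdot\nabla\chi$ only the $-vy$ part survives, so that after moving the derivative off $q_2$ one finds $\abs{T_2}+\abs{T_3}\le C\,\abs{v}\,\norm{\nabla\chi}_{L^2}\le C\mathcal E$, using $q_2,h\in L^\infty$ and $L^2(D)\hookrightarrow L^1(D)$.

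The only delicate term is $T_1$, the one responsible for the logarithmic loss. Here $\nabla b=\nabla\nabla^\perp\Delta^{-1}(q_1-h-\beta y)$, so the Calder\'on--Zygmund bound with its linear growth of the operator norm yields $\norm{\nabla b}_{L^p}\le Cp\,\norm{q_1-h-\beta y}_{L^p}\le CpM'$ for all $p\in[2,\infty)$, with $M'$ depending only on $M$ and $\norm{h}_{L^\infty}$. On the other hand $\nabla\chi=\nabla\Delta^{-1}w$ with $w\in L^\infty$, and the associated kernel is locally integrable, so $\norm{\nabla\chi}_{L^\infty}\le C\norm{w}_{L^\infty}\le CM$. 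H\"older's inequality with exponents $(p,2p',2p')$, where $\tfrac1p+\tfrac1{p'}=1$, together with the interpolation $\norm{\nabla\chi}_{L^{2p'}}\le\norm{\nabla\chi}_{L^2}^{1-1/p}\norm{\nabla\chi}_{L^\infty}^{1/p}$, then gives $\abs{T_1}\le\norm{\nabla b}_{L^p}\norm{\nabla\chi}_{L^{2p'}}^2\le Cp\,\mathcal E^{\,1-1/p}$, uniformly in $t$ and in $p\ge 2$.

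Combining the estimates, $\frac{d}{dt}\mathcal E\le Cp\,\mathcal E^{1-1/p}+C\mathcal E$ for every $p\ge 2$; optimizing with $p=\log(1/\mathcal E)$ in the small-$\mathcal E$ regime produces the Osgood inequality $\frac{d}{dt}\mathcal E\le C\,\mathcal E\,\log(e/\mathcal E)$, and since $\mathcal E(0)=0$ while $\int_0 \frac{ds}{s\log(e/s)}=\infty$, Osgood's lemma (\cite{Os98}) forces $\mathcal E\equiv 0$, as desired. The main obstacle is the rigorous justification of the identity $\frac{d}{dt}\mathcal E=T_1+T_2+T_3$ for merely $L^\infty$ solutions: the integrations by parts and the chain rule must be legitimised at this low regularity, which is exactly the renormalization issue of DiPerna--Lions type. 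It is resolved because the drift $\nabla^\perp\psi_i$ is log-Lipschitz, with $\nabla\nabla^\perp\psi_i\in L^p$ for all finite $p$ (\emph{cfr.} the discussion after \autoref{thm:regularexistence}); hence the weak solutions are Lagrangian and the computation may be performed on mollifications and passed to the limit, the commutator errors vanishing by the Sobolev regularity of the velocity. Alternatively, one may compare the two Lagrangian flows directly, estimating $\norm{\phi^1_t-\phi^2_t}_{L^2}$ through the same log-Lipschitz mechanism, again closing the argument via Osgood uniqueness (\cite{Ju63}).
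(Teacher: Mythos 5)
Your argument is correct and is essentially the paper's own approach: the paper proves uniqueness by exactly this Judovi\v{c}-type energy estimate at the level of the velocity field $\nabla^\perp\psi$, relying on the Calder\'on--Zygmund/BMO bound $\norm{\nabla\nabla^\perp\psi}_{L^p}\leq Cp$ to reach an Osgood/Gronwall-type inequality, deferring the details to \cite[Theorem 8.2]{MaBe02} and \cite{AzBe15}. You simply carry out in full (including the mean-flow coupling terms $T_2,T_3$, which you handle correctly) what the paper delegates to those references.
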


Uniqueness can be obtained by energy estimates at the level of the velocity vector field $v=\nabla^\perp \psi$.
Such estimates are performed for instance in \cite[Theorem 8.2]{MaBe02} for the 2D Euler equations
($h=0$, $\beta=0$), and again they rely on the fact that $\nabla\nabla^\perp \psi$ is in BMO
to arrive at Gronwall-type inequalities,
something which is not influenced by the addition of regular terms such as $h+\beta y$ to $q$. 
We refer to \cite{AzBe15} for a thorough discussion of uniqueness for a large class of active scalar equations
sharing similar features.
We also mention the recent work \cite{Ch19a}, where the arguments we just sketched
are applied to a barotropic quasi-geostrophic model closely related to ours: the difference consists
in impermeable boundary conditions on the whole boundary and the presence of a free surface effect instead of the
fixed topography $h$ we consider. The paper \cite{Ch19b}, moreover, is devoted to multi-layered 
barotropic quasi-geostrophic equations.

\subsection{Conserved Quantities and Gibbsian Measures}\label{ssec:gibbsianmeasures}
Smooth solutions of \eqref{eq:bqg} preserve the first integrals energy and enstrophy,
\begin{equation*}
E=\frac12V^2+\frac12 \dashint \abs{\nabla^\perp\psi'}^2, \quad Q=\beta V+\frac12 \dashint (q-\beta y)^2.
\end{equation*}
We refer again to \cite[Section 1.4]{MaWa06} for a detailed discussion of conserved quantities.
As already remarked, energy $E$ can be seen as a functional of variables $(V,q)$
by solving the Poisson equation \eqref{eq:poisson}.

In \eqref{eq:gibbsianmeasures} above, we have formally introduced the Gibbsian measures
\begin{equation*}
d\nu_{\alpha,\mu}(V,q)=\frac1{Z_{\alpha,\mu}} e^{-\alpha(\mu E+Q)}dVdq, \quad \alpha,\mu>0,
\end{equation*}
the expression meaning that we consider the Gaussian measure whose inverse covariance operator
is given by the quadratic functional $\alpha(\mu E+Q)$ of $(V,q)$.

Let us now provide a rigorous framework: we define $\nu_{\alpha,\mu}$ as the joint law of
the Gaussian variable $V\sim N\pa{-\frac\beta\mu,\frac1{\alpha\mu}}$ and the Gaussian random field $q$
indexed by $\H^0$ with mean and covariance given by, for $f,g\in \H^0$,
\begin{align*}
\expt{\brak{q,f}}=\brak{\bar q,f}&=\brak{\frac\mu{\mu-\Delta} h+\beta y,f}\\
\expt{\brak{q,f}\brak{q,g}}-\brak{\bar q,f}\brak{\bar q,g}
&=\brak{f, \frac1{\alpha(1-\mu\frac1\Delta)}g},
\end{align*}
$V$ and $q$ being independent. Notice that $\alpha$ only plays a role in the variance. 
The link between the latter and the formal definition \eqref{eq:gibbsianmeasures}
is perhaps clearer thinking of the formal reference measure $dVdq$ as the infinite product of uniform
measures on the infinite product space $\R\times \C^\Lambda$ of Fourier modes (modulo the relation $\hat{q}_{j,k} = \overline{\hat{q}_{-j,k}}$),
and considering the Boltzmann exponent $e^{-\alpha(\mu E+Q)}$ as the infinite product of
densities given by the Parseval expansion of the quadratic form $\alpha(\mu E+Q)$.  

In order to deal with centred variables we set
\begin{equation}\label{eq:centering}
U=V+\frac\beta\mu, \quad \omega=q-\bar q,
\end{equation}
the new variables satisfying equations of motion
\begin{equation} \label{eq:bqgreduced}
\begin{cases}
\de_t\omega+\nabla^\perp \Delta^{-1}\omega\cdot \nabla\omega+ L\omega=0\\
\frac{d U}{dt}=\dashint_D h  \de_x \Delta^{-1}\omega
\end{cases},
\end{equation}
where $L\omega$ collects all affine terms in $\omega$,
\begin{align*}
L\omega&=\pa{U-\frac\beta\mu}\de_x\omega
+U \frac{\mu\de_x}{\mu-\Delta}h
+\frac{\nabla^\perp}{\mu-\Delta}h\cdot\nabla\omega\\
&\qquad +\nabla^\perp\Delta^{-1}\omega \cdot\frac{\mu\nabla}{\mu-\Delta}h+\beta \de_x\Delta^{-1}\omega.
\end{align*}
The equivalence of \eqref{eq:bqgreduced} and \eqref{eq:bqg} is intended for smooth solutions. 

We now define the purely quadratic \emph{pseudoenergy}: for $\mu>0$,
\begin{equation}\label{eq:pseudoenergy}
S_\mu(U,\omega)=\frac\mu{2}U^2+\frac12 \int_{D} (\omega-\mu \Delta^{-1}\omega)\omega dxdy
\end{equation}
so that the law of $(V,\omega)$ under $\nu_{\alpha,\mu}$ is given by
\begin{equation} \label{eq:gibbs}
d\eta_{\alpha,\mu}(U,\omega)=\frac1{\tilde Z_{\alpha,\mu}} e^{-\alpha S_\mu(U,\omega)}dUd\omega,
\end{equation}
the latter to be interpreted analogously to the definition of $\nu_{\alpha,\mu}$ above, \eqref{eq:gibbsianmeasures}:
it is the joint law of the real Gaussian variable $U\sim N\pa{0,\frac1{\alpha\mu}}$ and the independent centred
Gaussian field $\omega$ with covariance operator $\alpha^{-1}(1-\mu\Delta^{-1})^{-1}$.
In order to lighten the exposition, we will abuse notation denoting by $\eta_{\alpha,\mu}(d\omega)$ the law of
$\omega$ under $\eta_{\alpha,\mu}$, and analogously for $U$.
We will also denote
\begin{equation*}\label{eq:variance}
\sigma^2\coloneqq\int U^2 d\nu_{\alpha,\mu}(U,\omega)=\frac1{\alpha\mu}, 
\quad \sigma_{j,k}^2\coloneqq\int |\hat \omega_{j,k}|^2 d\eta_{\alpha,\mu}(U,\omega)=\frac{j^2+k^2}{\alpha(\mu+j^2+k^2)}.
\end{equation*}
Indeed, under $\eta_{\alpha,\mu}$ the Fourier modes $\hat \omega_{j,k}$ are independent
centred Gaussian variables with the above covariances; notice that they are complex valued, but subject
to the condition $\bar{\hat \omega}_{j,k}=\hat \omega_{-j,k}$.

We have considered $\omega$ under $\eta_{\alpha,\mu}$ as a Gaussian random field indexed by $\H^0$
(its \emph{reproducing kernel Hilbert space}): it is well known that it can also be identified with
a random distribution in a larger Hilbert space into which $\H^0$ has an Hilbert-Schmidt embedding,
such as $\H^{-1-\delta}$ for any $\delta>0$.
In other terms, since all $\sigma_{j,k}^2$, $(j,k)$ varying in $\Lambda$, are of order 1,
the random Fourier series $\omega=\sum_{(j,k) \in \Lambda} \hat \omega_{j,k}e_j s_k$
converges in $L^2(\eta_{\alpha,\mu})$ in $\H^{-1-\delta}$ for any $\delta>0$, but not for $\delta\geq 0$.

\begin{lem}
	For any $\delta>0$, $(\R\times\H^{-1-\delta},\R\times\H^0,\eta_{\alpha,\mu})$ is a (complex) abstract Wiener space;
	equivalently, under $\eta_{\alpha,\mu}$, $\omega$ can be identified with a $\H^{-1-\delta}$-valued
	Gaussian random variable.
\end{lem}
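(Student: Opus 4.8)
The plan is to recognise the stated triple as an instance of Gross's abstract Wiener space construction, whose hypotheses here reduce to two verifications: that the Cameron--Martin (reproducing kernel) space of $\eta_{\alpha,\mu}$ is exactly $\R\times\H^0$, and that its inclusion into $\R\times\H^{-1-\delta}$ is Hilbert--Schmidt. Recall that for a centred Gaussian measure with covariance operator $C$ the reproducing kernel Hilbert space coincides with the range of $C^{1/2}$ equipped with the norm $\norm{h}^2=\brak{C^{-1}h,h}$, and that a continuous, dense, Hilbert--Schmidt embedding of this space into a larger separable Hilbert space $B$ forces the canonical Gaussian cylinder measure to extend to a genuine Gaussian measure on $B$, making the triple an abstract Wiener space.

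First I would identify the Cameron--Martin space. Since $(1-\mu\Delta^{-1})^{-1}$ is a function of $\Delta$, it is diagonal in the basis $(e_js_k)_{(j,k)\in\Lambda}$, and the covariance $\alpha^{-1}(1-\mu\Delta^{-1})^{-1}$ acts by the scalars $\sigma_{j,k}^2=(j^2+k^2)/(\alpha(\mu+j^2+k^2))$ already recorded; hence the inverse covariance acts by $\alpha(\mu+j^2+k^2)/(j^2+k^2)=\alpha(1+\mu/(j^2+k^2))$. As $k\geq 1$ on $\Lambda$, these eigenvalues lie in $[\alpha,\alpha(1+\mu)]$, so the Cameron--Martin norm $\sum_{(j,k)\in\Lambda}\alpha(1+\mu/(j^2+k^2))\abs{\hat h_{j,k}}^2$ is equivalent to $\norm{h}_{\H^0}^2$; thus the reproducing kernel space equals $\H^0$ with an equivalent inner product. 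The $U$-marginal contributes the trivial one-dimensional Cameron--Martin space $\R$, and by independence the full space is $\R\times\H^0$. Throughout one keeps the reality constraint $\overline{\hat\omega_{j,k}}=\hat\omega_{-j,k}$ so that $\omega$ is a real, odd-in-$y$ distribution, which is the origin of the \emph{complex} qualifier in the statement.

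Next I would verify the Hilbert--Schmidt property. The finite-dimensional factor $\R\hookrightarrow\R$ is harmless, so it suffices that $\H^0\hookrightarrow\H^{-1-\delta}$ be Hilbert--Schmidt; by the embedding criterion recalled in \autoref{sec:definitions} (the injection $\H^\gamma\hookrightarrow\H^{\gamma'}$ is Hilbert--Schmidt whenever $\gamma>\gamma'+1$) this holds precisely because $0>-1-\delta+1=-\delta$ for every $\delta>0$. Invoking Gross's theorem then yields the abstract Wiener space structure and, equivalently, the almost sure $\H^{-1-\delta}$-valued realisation of $\omega$.

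I expect no genuine obstacle: the content is essentially a repackaging of the convergence of the random series $\omega=\sum_{(j,k)\in\Lambda}\hat\omega_{j,k}e_js_k$ in $L^2(\eta_{\alpha,\mu})$ valued in $\H^{-1-\delta}$ already observed before the statement, which is itself immediate from $\sum_{(j,k)\in\Lambda}\sigma_{j,k}^2(j^2+k^2)^{-1-\delta}<\infty$. The only points requiring a little care are the bookkeeping of the reality and oddness conditions and the check that passing from the covariance to the equivalent $\H^0$ inner product does not affect the Hilbert--Schmidt bound, both of which are routine.
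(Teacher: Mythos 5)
Your proposal is correct and follows essentially the same route as the paper, which offers no separate proof but relies on exactly the observations you spell out: the reproducing kernel (Cameron--Martin) space of $\eta_{\alpha,\mu}$ is $\H^0$ (the covariance $\alpha^{-1}(1-\mu\Delta^{-1})^{-1}$ being diagonal with eigenvalues bounded above and below, since $\sigma_{j,k}^2$ are of order one), the embedding $\H^0\hookrightarrow\H^{-1-\delta}$ is Hilbert--Schmidt by the criterion recalled in \autoref{sec:definitions}, and consequently the random Fourier series $\omega=\sum_{(j,k)\in\Lambda}\hat\omega_{j,k}e_js_k$ converges in $L^2(\eta_{\alpha,\mu})$ with values in $\H^{-1-\delta}$, with the Gaussian-analysis background deferred to \cite{DPZa14}. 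Your explicit invocation of Gross's theorem and the checks on the reality constraint and the norm equivalence are a fleshed-out version of the same argument, with no gaps.
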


We refer to \cite{DPZa14} for a complete treatment of the Gaussian analysis underlying the above discussion,
and to \cite{AlFlSi08} for its application to Enstrophy measures of 2-dimensional Euler equations.

\section{Weak Solutions for Low-Regularity Marginals}\label{sec:weaksolutions}

We now discuss how to interpret \eqref{eq:bqgreduced} in the case when, at a fixed time, 
$(U,\omega)$ is a sample of $\eta_{\alpha,\mu}$. Indeed, as we remarked above,
in that case $\omega$ can be identified at best as a distribution in $\H^{-1-\delta}$, $\delta>0$,
and thus the main concern is the nonlinear term of the evolution equation,
the affine term $L\omega$ being easily defined pathwise as a distribution of class $\H^{-2-\delta}$. 

\subsection{Fourier Expansion of the Nonlinear Term}\label{ssec:nonlinearity}
Let us fix $\delta>0$, and consider the coupling between the nonlinear term 
$\nabla^\perp \Delta^{-1}\omega \cdot \nabla\omega$ and a smooth test function $\phi \in C^\infty(D)$ . 
If $\omega\in \H^{-1-\delta}$, we can define the tensor product $\omega \otimes \omega$ as a distribution
on $D \times D$ via
\begin{equation} \label{eq:duality}
\brak{\omega \otimes \omega, \varphi \otimes \psi} \coloneqq \brak{\omega,\varphi} \brak{\omega,\psi}, \hspace{0.5cm} \varphi,\psi \in C^\infty(D),
\end{equation}
where $\varphi \otimes \psi (z,z') \coloneqq \varphi(z)\psi(z')$; it is easily observed that
the resulting distribution $\omega \otimes \omega$ is of class $\H^{-2-2\delta}(D \times D)$ (with $\H^\alpha(D\times D)$
we denote the closed subspace of $H^\alpha(D\times D)$ generated by vectors $e_j s_k \otimes e_{j'}s_{k'}$), so the expression 
\begin{equation}
\int_{D \times D} H(z,z')\omega(dz)\omega(dz')=\brak{\omega \otimes \omega, H}
\end{equation}
is well defined via duality for every $H \in \H^{2+2\delta}(D \times D)$. 
Now, given any smooth test function $\phi \in C^{\infty}(D)$, 
we look for a suitable function $H_\phi$ such that
\begin{equation} \label{eq:Hphi}
\int_D \nabla^\perp \Delta^{-1} \omega(z)\cdot\nabla\omega(z) \phi(z)dz=\int_{D \times D} H_\phi(z,z')\omega(z)\omega(z')dzdz'.
\end{equation}

We perform the computation in Fourier series. 
Let us also recall that we denote points of $D$ by $z=(x,y),z'=(x',y')$.
We thus have
\begin{gather*}
\omega(z)=\sum_{(j,k)\in\Lambda} \hat\omega_{j,k} e_j(x)s_k(y), \quad
\Delta^{-1}\omega(z)=-\sum_{(j,k)\in\Lambda} \frac{\hat\omega_{j,k}}{j^2+k^2} e_j(x)s_k(y),\\
\nabla\omega(z)=\sum_{(j,k)\in\Lambda} \binom{\imm j s_k(y)}{k c_k(y)} \hat\omega_{j,k} e_j(x),\\
\nabla^\perp\Delta^{-1}\omega(z)=\sum_{(j,k)\in\Lambda}
\binom{k c_k(y)}{-\imm j s_k(y)} \frac{\hat\omega_{j,k}}{j^2+k^2} e_j(x),\\
\nabla^\perp\Delta^{-1} \omega(z)\cdot\nabla\omega(z)=\sum_{(j,k)\in\Lambda}\sum_{(j',k')\in\Lambda}
\pa{(jk'-j'k)s_{k+k'}(y) +(j'k+jk')s_{k-k'}(y)}\\
\qquad \times \frac{\hat\omega_{j,k}\hat\omega_{j',k'}}{2\imm(j^2+k^2)} e_{j+j'}(x),
\end{gather*}
so that equation \eqref{eq:Hphi} becomes
\begin{align*}
&\int_D \nabla^\perp \Delta^{-1} \omega(z)\cdot\nabla\omega(z) \phi(z)dz\\
&=\sum_{(j,k)\in\Lambda}\sum_{(j',k')\in\Lambda} 
\pa{(jk'-j'k)\hat\phi_{-j-j',k+k'} +(j'k+jk')\hat\phi_{-j-j',k-k'}}
\frac{\hat\omega_{j,k}\hat\omega_{j',k'}}{2\imm(j^2+k^2)}\\
&=\sum_{(j,k)\in\Lambda}\sum_{(j',k')\in\Lambda} 
\pa{(jk'-j'k)\hat\phi_{-j-j',k+k'} +(j'k+jk')\hat\phi_{-j-j',k-k'}}\\
&\quad \times\pa{\frac1{j^2+k^2}-\frac1{j'^2+k'^2}}\frac{\hat\omega_{j,k}\hat\omega_{j',k'}}{4\imm}
= \sum_{\substack{(j,k)\in\Lambda\\(j',k')\in\Lambda}} \F_{-j,k}\F_{-j',k'} (H_\phi) \hat\omega_{j,k}\hat\omega_{j',k'},
\end{align*}
the second step consisting in a symmetrisation with respect to indices $(j,k)$ and $(j',k')$.
The last equality is the Fourier expansion of the right-hand side of \eqref{eq:Hphi},
and becomes our definition of $H_\phi$: 
\begin{align*}
\F_{j,k}\F_{j',k'} H_\phi \coloneqq& \pa{(j'k-jk')\hat\phi_{j+j',k+k'} - (j'k+jk')\hat\phi_{j+j',k-k'}} \\
&\times \pa{\frac1{j^2+k^2}-\frac1{j'^2+k'^2}}\frac1{4\imm},
\end{align*}
where $\F_{j,k}\F_{j',k'}$ is an abbreviation for the more rigorous notation $\F_{j,k} \otimes \F_{j',k'}$, the Fourier projector on $e_j s_k \otimes e_{j'}s_{k'}$. We also adopt the convention
\begin{equation*}
\hat\phi_{j+j',k-k'} \coloneqq -\hat\phi_{j+j',k'-k} \quad \text{whenever}\quad k-k'<0.
\end{equation*}

So far, $H_\phi$ is defined only as a formal Fourier series: the forthcoming Lemma discusses
the convergence of the latter, \emph{i.e.} the regularity of $H_\phi$.
\begin{lem} \label{prop:HphiL2}
	For every $\phi\in \H^2$, $H_\phi\in \H^0(D\times D)$.
\end{lem}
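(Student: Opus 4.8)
The plan is to estimate $\norm{H_\phi}_{\H^0(D\times D)}^2$ directly from the explicit Fourier coefficients just derived, namely
\begin{equation*}
\F_{j,k}\F_{j',k'}H_\phi = \pa{(j'k-jk')\hat\phi_{j+j',k+k'} - (j'k+jk')\hat\phi_{j+j',k-k'}}\pa{\frac1{j^2+k^2}-\frac1{j'^2+k'^2}}\frac1{4\imm},
\end{equation*}
and to bound the fourfold sum $\sum_{(j,k),(j',k')\in\Lambda}\abs{\F_{j,k}\F_{j',k'}H_\phi}^2$ by a constant multiple of $\norm{\phi}_{\H^2}^2$. I would split the coefficient into its two summands, the one carrying $\hat\phi_{j+j',k+k'}$ and the one carrying $\hat\phi_{j+j',k-k'}$, and treat the two resulting sums separately; since they are entirely analogous I describe the second (difference-frequency) one, which is marginally more delicate because its internal summation is not a priori finite.

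The key algebraic observation is that, setting the output frequency $p=j+j'$ and $r=k-k'$, the bilinear prefactor collapses onto the output frequency: one computes $j'k+jk' = pk - jr$ (and likewise $j'k-jk'=pk-jq$ in the sum-frequency term, with $q=k+k'$), while $\frac1{j^2+k^2}-\frac1{j'^2+k'^2}=\frac{b-a}{ab}$ with $a=j^2+k^2$, $b=j'^2+k'^2$ and $b-a=(p^2+r^2)-2(pj+rk)$. Writing $P^2=p^2+r^2$, two applications of Cauchy--Schwarz give $\abs{pk-jr}\le P\sqrt a$ and $\abs{pj+rk}\le P\sqrt a$, whence $\abs{b-a}\le P^2+2P\sqrt a$. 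Combining these I expect to control a single coefficient by
\begin{equation*}
\abs{\F_{j,k}\F_{j',k'}H_\phi}\lesssim \abs{\hat\phi_{p,r}}\pa{\frac{P^3}{\sqrt a\, b}+\frac{P^2}{b}},
\end{equation*}
so that, after squaring and using that the constraint $k,k'\ge 1$ forces $a,b\ge 1$, one gets $\abs{\F_{j,k}\F_{j',k'}H_\phi}^2\lesssim \abs{\hat\phi_{p,r}}^2\,(P^6+P^4)\,b^{-2}$.

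It then remains to group the fourfold sum by the output frequency $(p,r)$ and to sum over the internal degree of freedom. The factor $b^{-2}=(j'^2+k'^2)^{-2}$ is summable over the internal variable because $\sum_{(j',k')\neq 0}(j'^2+k'^2)^{-2}<\infty$ in two dimensions, which yields $\sum_{\text{internal}}\abs{\F_{j,k}\F_{j',k'}H_\phi}^2\lesssim \abs{\hat\phi_{p,r}}^2(p^2+r^2)^3$. Since $\phi$ is odd in $y$, the modes with $r=0$ vanish, so $p^2+r^2\ge 1$ and $(p^2+r^2)^3\le(p^2+r^2)^4$; summing over $(p,r)$ therefore gives $\norm{H_\phi}_{\H^0(D\times D)}^2\lesssim\sum_{(p,r)}\abs{\hat\phi_{p,r}}^2(p^2+r^2)^4=\norm{\phi}_{\H^2}^2<\infty$. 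The sum-frequency term is handled identically, the only difference being that there the internal index $k$ ranges over the finite set $\set{1,\dots,q-1}$, which makes convergence even more immediate.

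The step I expect to be the genuine obstacle is obtaining the gain $b^{-2}$: the crude estimate $\abs{j'k\pm jk'}^2\lesssim ab$ destroys the cancellation and leaves behind $\sum (j^2+k^2)^{-1}$, which diverges logarithmically, so the naive bound fails. The resolution is precisely the identity $j'k+jk'=pk-jr$ together with the cancellation encoded in $\frac1a-\frac1b$, which together align the bilinear factor with the \emph{fixed} output frequency and turn the problematic high--high interactions into an absolutely convergent series; keeping track of the elementary constraint $a,b\ge 1$, to rule out spurious small denominators, is the only other point requiring care.
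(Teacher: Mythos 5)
Your proof is correct and follows essentially the same route as the paper's: both group the double sum by the output frequency, exploit the same two cancellation identities (your $j'k+jk'=pk-jr$ is the paper's $l^\perp\cdot(m-l)=l^\perp\cdot m$, and your $b-a=P^2-2(pj+rk)$ is the paper's $|m-l|^2-|l|^2=m\cdot(m-2l)$) together with Cauchy--Schwarz, and close the estimate via the two-dimensional summability of $\sum_{l}|l|^{-4}$. Your bookkeeping is slightly lossier --- you discard the factor $1/\sqrt{a}$ and end with weight $(p^2+r^2)^3$ where the paper keeps $|m|^4$, which is what underlies the sharper claim $\phi\in\H^{2+\delta}\Rightarrow H_\phi\in\H^\delta(D\times D)$ in the remark following the lemma --- but this still sits comfortably within the $\H^2$ hypothesis, so the lemma as stated is fully proved.
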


\begin{proof}
	To ease notation, we denote $l=(j,k)$ and $l'=(j',k')$. We have that $H_\phi\in \H^0(D\times D)$ if and only if 
	\begin{equation} \label{eq:000}
	\sum_{l,l' \in \Lambda} |\F_l \F_{l'} (H_\phi)|^2 < \infty.
	\end{equation}
	The Fourier coefficients of $H_\phi$ are given by two summands which we estimate separately. The first one is
	\begin{equation*}
	\F_{j+j',k+k'}(\phi) (j'k-jk') \pa{\frac1{j^2+k^2}-\frac1{j'^2+k'^2}}= {\F_{l+l'}(\phi)} \pa{-l^\perp \cdot l'}\pa{\frac1{|l|^2}-\frac1{|l'|^2}},
	\end{equation*}
	where $|l|^2=j^2+k^2$, and similarly for $l'$; taking squares and summing over $l+l'=m \in \Lambda$ gives us
	\begin{equation} \label{eq:001}
	\sum_{m \in \Lambda} |{\F_{m}(\phi)}|^2 \sum_{\substack{l \in \Lambda\\ l\neq m}} \left( l^\perp \cdot \left( m-l\right) \pa{\frac1{|l|^2}-\frac1{|m-l|^2}} \right)^2.
	\end{equation}	 
	We now resort to the following inequalities:
	\begin{align}\label{eq:007}
	l^\perp \cdot (m-l) = l^\perp \cdot m &\leq |l||m|, \\
	\label{eq:008}
	|m-l|^2 - |l|^2 = m \cdot(m-2l)\leq |m||m-2l| &\leq |m| \left( |m-l|^2 +|l|^2 \right)^{1/2},
	\end{align}
	so that the inner summation in \eqref{eq:001} can be estimated with
	\begin{align*}
	\sum_{\substack{l \in \Lambda\\ l\neq m}} \left( \frac{|l|^2 |m|^4 |m-l|^2}{|l|^4|m-l|^4} + \frac{|l|^4 |m|^4}{|l|^4|m-l|^4}\right) \nonumber
	&=|m|^4 \sum_{\substack{l \in \Lambda\\ l\neq m}}  \left(\frac{1}{|l|^2|m-l|^2} + \frac{1}{|m-l|^4} \right) \\
	&\leq 2 |m|^4 \sum_{l \in \Lambda} \frac{1}{|l|^4}. 
	\end{align*}
	Modulo a multiplicative constant, \eqref{eq:001} is therefore smaller or equal to
	\begin{equation*}
	\sum_{m \in \Lambda} |{\F_{m}(\phi)}|^2 |m|^4  ,
	\end{equation*}
	which is finite as soon as $\phi \in \H^2$. The other contribution is given by the terms of the form
	\begin{equation*}
	\F_{j+j',k-k'}(\phi) (j'k+jk')
	\pa{\frac1{j^2+k^2}-\frac1{j'^2+k'^2}},
	\end{equation*}
	which after the change of variables $(j,k,j',k') \mapsto (j,k,-j',k')$ becomes
	\begin{align*}
	\F_{l-l'}(\phi)\pa{l^\perp \cdot l'}\pa{\frac1{|l|^2}-\frac1{|l'|^2}},
	\end{align*}
	which can be estimated in a similar fashion taking the modulo square and summing over $l-l'=m \in \Lambda$. Thus \eqref{eq:000} is proved.
\end{proof}

\begin{rmk} \label{rmk:Hsin}
	Even though we will not need it in the following, for every $\delta<1$ 	the above computation actually yields $H_\phi\in \H^\delta(D\times D)$  if $\phi \in \H^{2+\delta}$.
	This in fact is the optimal Sobolev regularity, since in general $H_\phi\notin \H^\delta(D\times D)$ for $\delta \geq 1$, 
	even for more regular $\phi$. Indeed, for $\phi(x,y)=\sin(y)$ the Fourier coefficients of $H_\phi$ are given by
	\begin{equation*}
	\F_{j,k}\F_{j',k'} (H_\phi) = \frac{\mathbf{1}_{\{j+j'=0\}}  \mathbf{1}_{\{k-k'=1\}} }{4\imm}\frac{j(1-2k)}{(j^2+k^2)(j^2+(k-1)^2)},
	\end{equation*}
	therefore $H_\phi\in \H^\delta(D\times D)$ if and only if
	\begin{equation*}
	\sum_{(j,k) \in \Lambda} \left( 1+ 2j^2 +k^2 + (k-1)^2 \right)^\delta \frac{j^2(1-2k)^2}{(j^2+k^2)^2(j^2+(k-1)^2)^2} < \infty,
	\end{equation*}
	but the sum above can be estimated from below (modulo a positive multiplicative constant) by
	\begin{equation*}
	\sum_{(j,k) \in \Lambda} \frac{j^2k^2}{(j^2+k^2)^{4-\delta}},
	\end{equation*}
	the latter converging if and only if $\delta < 1$.
\end{rmk}

Unfortunately, since $H_\phi$ does not belong to $\H^{2+2\delta}(D\times D)$,
it is not possible to define the nonlinear term of \eqref{eq:bqgreduced}
\emph{pathwise}, that is fixing a realisation of $\omega$ under $\eta_{\alpha,\mu}$
and taking products of distributions. It is at this point that we make essential use of the probabilistic approach
to invariant measures.

\subsection{The Nonlinear Term as a Stochastic Integral}
Thanks to the peculiar form of the fluid-dynamic nonlinearity, which in our setting is reflected
by the coefficients of $H_\phi$, when $\omega$ is sampled from the Gaussian measure $\eta_{\alpha,\mu}$
it is possible to define the nonlinear term as a (double) stochastic integral,
that is, as an $L^2(\eta_{\alpha,\mu})$-limit of suitable approximations.

The following result finds analogues in \cite[Lemma 1.3.2]{AlCr90}, see also \cite{AlRFHK79},
and in \cite[Theorem 8]{Fl18} or the related \cite{DPFlRo17,FlLu18,Gr19,FlGrLu19}, 
all dealing with stationary solutions of 2-dimensional Euler equations.

\begin{prop}\label{prop:sim}
	Let $H\in \H^0(D\times D)$ be a symmetric function. Consider functions	
	$(H^n)_{n\in\N} \subset \H^{2+2\delta}(D \times D)$ such that,
	for every $(j,k),(j',k') \in \Lambda$,
	\begin{equation} \label{eq:002}
	\lim_{n \to \infty} \sum_{(j,k)\in \Lambda}\F_{j,k}\F_{j,k}(H^n)\sigma_{j,k}^2 =0,\quad \F_{j,k}\F_{j',k'}(H^n) =\F_{j',k'}\F_{j,k}(H^n),
	\end{equation}
	and suppose that the sequence $H^n$ approximates $H$ in the following sense:
	\begin{equation} \label{eq:003}
	\lim_{n \to \infty}  \sum_{\substack{(j,k)\in \Lambda\\(j'k') \in \Lambda}} 
	\left( \F_{j,k}\F_{j',k'}(H^n-H) \right)^2 \sigma_{j,k}^2 \sigma_{j',k'}^2 = 0.
	\end{equation}
	Under $\eta_{\alpha,\mu}$, the sequence of random variables $\brak{\omega \otimes \omega,H^n}$ 
	defined by \eqref{eq:duality}, converges in mean square. 
	Moreover, the limit does not depend on the approximating sequence $H^n$.
\end{prop}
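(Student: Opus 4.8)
The plan is to recognise $\brak{\omega\otimes\omega,H^n}$ as a second-order polynomial in the Gaussian Fourier modes $\hat\omega_{j,k}$ and to compute its first two moments explicitly via the Gaussian (Wick/Isserlis) formula. The two hypotheses are tailored so that \eqref{eq:002} controls the mean and \eqref{eq:003} controls the variance; together they force the sequence to be Cauchy in $L^2(\eta_{\alpha,\mu})$. The limit will be a centred element of the second Wiener chaos, which is the rigorous meaning of the ``double stochastic integral''.

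First I would record, for a symmetric $H\in\H^{2+2\delta}(D\times D)$ (so that the pairing is pathwise well defined, as $\omega\otimes\omega\in\H^{-2-2\delta}(D\times D)$), the identity
\[
\brak{\omega\otimes\omega,H}=\sum_{l,l'\in\Lambda}\F_l\F_{l'}(H)\,\hat\omega_l\hat\omega_{l'},
\qquad l=(j,k),\ l'=(j',k'),
\]
which follows from \eqref{eq:duality}. Since the modes are centred and jointly Gaussian, $\EE[\hat\omega_l\hat\omega_{l'}]$ is supported on the diagonal set of index pairs singled out by the reality constraint $\hat\omega_{-l}=\overline{\hat\omega_l}$; consequently the mean of $\brak{\omega\otimes\omega,H}$ reduces precisely to the diagonal sum appearing in \eqref{eq:002}.

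The core step is the Itô--Wiener isometry. Applying the fourth-moment formula for Gaussians to the centred quantity yields, for symmetric $H$,
\[
\EE\bra{\abs{\brak{\omega\otimes\omega,H}-\EE\bra{\brak{\omega\otimes\omega,H}}}^2}
=C\sum_{l,l'\in\Lambda}\abs{\F_l\F_{l'}(H)}^2\sigma_l^2\sigma_{l'}^2,
\]
which is exactly the weighted $\ell^2$ quantity governing \eqref{eq:003}. With these two moment identities I would conclude as follows. Applying them to the difference $H^n-H^m$ (both in $\H^{2+2\delta}$, so everything is pathwise defined), the mean of $\brak{\omega\otimes\omega,H^n-H^m}$ tends to $0$ by \eqref{eq:002}, while the variance tends to $0$ by \eqref{eq:003} together with the triangle inequality, since $H^n\to H$ in the weighted norm makes $(H^n)_n$ Cauchy there. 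Hence $\pa{\brak{\omega\otimes\omega,H^n}}_n$ is Cauchy in $L^2(\eta_{\alpha,\mu})$ and converges. Independence of the limit from the approximating sequence follows identically, applying the same bounds to the difference $H^n-\tilde H^n$ of two admissible sequences: its mean vanishes in the limit by \eqref{eq:002} and its variance by \eqref{eq:003}, whence the two limits coincide.

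The main obstacle I anticipate is the bookkeeping in the complex, reality-constrained setting. Computing the second moment requires Wick's theorem applied to products of the form $\hat\omega_a\hat\omega_b\overline{\hat\omega_c\hat\omega_d}$ while respecting the constraint $\hat\omega_{-l}=\overline{\hat\omega_l}$ and the fact that the $j=0$ modes are real-valued. Tracking the pairing combinatorics correctly --- so that the cross terms between genuinely distinct modes cancel and only the stated weighted sums survive, with the right constant $C$ and the right diagonal contribution to the mean --- is the delicate point. Once the isometry is established, the Cauchy and uniqueness arguments are routine.
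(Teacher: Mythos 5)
Your proposal is correct and follows essentially the same route as the paper's proof: both rest on the Wick--Isserlis fourth-moment computation for the Gaussian modes, yielding that the second moment of $\brak{\omega\otimes\omega,H}$ equals $\bigl(\sum_{l}\F_l\F_l(H)\sigma_l^2\bigr)^2+2\sum_{l,l'}\abs{\F_l\F_{l'}(H)}^2\sigma_l^2\sigma_{l'}^2$, which applied to differences $H^n-H^m$ gives the Cauchy property, with uniqueness of the limit by the triangle inequality and \eqref{eq:003}. Your only cosmetic deviation is splitting mean (controlled by \eqref{eq:002}) and centred variance (controlled by \eqref{eq:003}) instead of computing the raw second moment directly, and the reality-constraint bookkeeping you flag as delicate is handled in the paper by the same conjugated Wick formula you describe.
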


\begin{proof}
	To ease notation we denote $l=(j,k)$ and $l'=(j',k')$. 
	For any function $H\in \H^0(D\times D)$, we compute
	\begin{align*}
	\mathbb{E} \left[ \brak{\omega \otimes \omega,H}^2 \right] &= \mathbb{E}  \left[ \sum_{\substack{l,l'\in \Lambda\\m,m' \in \Lambda}} \F_l \F_{l'} (H) \F_m \F_{m'} (H) \overline{\hat{\omega}_{l} \hat{\omega}_{l'} \hat{\omega}_{m}\hat{\omega}_{m'} } \right] \\
	&= \sum_{\substack{l,l'\in \Lambda\\m,m' \in \Lambda}} \F_l \F_{l'} (H) \F_m \F_{m'} (H)  \mathbb{E}  \left[ \overline{\hat{\omega}_{l} \hat{\omega}_{l'} \hat{\omega}_{m}\hat{\omega}_{m'} }\right].
	\end{align*}
	By Wick-Isserlis formula the expected value in the last summand is given by
	\begin{align*}
	\mathbb{E}  \left[ \overline{\hat{\omega}_{l} \hat{\omega}_{l'} \hat{\omega}_{m}\hat{\omega}_{m'} }\right] 
	&= \sigma^2_l \sigma^2_m \delta_{l,l'}\delta_{m,m'}+
	\sigma^2_l \sigma^2_{l'} \delta_{l,m}\delta_{l',m'}+\sigma^2_l \sigma^2_{l'} \delta_{l,m'}\delta_{l',m}.
	\end{align*}
	Substituting and using the relations \eqref{eq:002} one gets
	\begin{align*}
	\mathbb{E} \left[ \brak{\omega \otimes \omega,H}^2 \right] 
	= \left( \sum_{l \in \Lambda} \F_l(H) \sigma_l^2\right)^2 + 2 \sum_{l,l' \in \Lambda} \F_l \F_{l'} (H)^2  \sigma_l^2 \sigma_{l'}^2.
	\end{align*}
	If conditions \eqref{eq:002} and \eqref{eq:003} hold, applying the latter equation to differences $H^m-H^n$ we obtain that
	the sequence of random variables $\brak{\omega \otimes \omega,H^n}$ is a Cauchy sequence in $L^2(\Omega)$. 
	The independence of limit from the sequence $(H^n)$ follows from triangular inequality and \eqref{eq:003}.
\end{proof}

\begin{rmk}
	In \cite{Fl18}, conditions \eqref{eq:002} and \eqref{eq:003} are replaced by
	\begin{align*}
	&H^n \mbox{ symmetric,} \quad
	\lim_{n \to \infty} \int H^n(z,z)dz = 0, \\
	&\lim_{n \to \infty} \int \int (H^n(z,z')-H(z,z'))^2 dzdz' =0,
	\end{align*}
	where integration is performed over the 2-dimensional torus. These conditions are simpler than ours 
	since we deal with coloured noise $\eta_{\alpha,\mu}$ rather then space white noise.
\end{rmk}

Consider now a test function $\phi \in C^\infty(D)$: \autoref{prop:sim} allows us to define 
the nonlinearity \eqref{eq:Hphi} as the $L^2(\eta_{\alpha,\mu})$-limit of  
$\brak{\omega \otimes \omega,H^n_\phi}$ for any sequence $H_\phi^n$ approximating $H_\phi$ in the above sense
(for instance, progressive truncations of Fourier series). 
To emphasize the peculiarity of its definition, we adopt a special notation for this object.
\begin{definition}
	For any $H\in \H^0(D\times D)$, and $H^n$ as in \autoref{prop:sim},
	\begin{equation}\label{eq:diamond}
	\brak{\omega \diamond \omega, H} 
	\coloneqq L^2(\eta_{\alpha,\mu})-\lim_{n \to \infty} \brak{\omega \otimes \omega,H^n}.
	\end{equation}
\end{definition}
We chose a distinct symbol because if we consider a smooth $H$ and confront
the new object we define and coupling with tensor products \eqref{eq:duality},
a straightforward computation reveals that
\begin{equation*}
\brak{\omega \diamond \omega, H}=\brak{\omega \otimes \omega, H} -  \sum_{(j,k)\in \Lambda}\F_{j,k}\F_{j,k}(H)\sigma_{j,k}^2.
\end{equation*}
Indeed, let $H^n$ be the following approximation of $H$:
\begin{align*}
\F_{j,k}\F_{j',k'}(H^n) \coloneqq \F_{j,k}\F_{j',k'}(H) - \frac{S}{n \sigma_{0,k}^2} \mathbf{1}_{\{j=j'=0,k=k'=1,\dots,n\}},
\end{align*}
where $S \coloneqq \sum_{(j,k)\in \Lambda}\F_{j,k}\F_{j,k}(H)\sigma_{j,k}^2 < \infty$. Hence 
\begin{equation*}
\brak{\omega \diamond \omega, H}=\brak{\omega \otimes \omega, H} - \lim_{n\to \infty} \sum_{k=1}^n \hat{\omega}_{0,k}^2 \frac{S}{n \sigma_{0,k}^2} = \brak{\omega \otimes \omega, H} - S.
\end{equation*}
as an equality between random variables in $L^2(\eta_{\alpha,\mu})$.
Notice that the last summand in the latter expression diverges for a generic $H\in \H^0(D\times D)$,
according to the fact that the coupling with tensor product $\omega \otimes \omega$ can not be defined in that case.

\begin{rmk}
	The present paragraph takes its name because the coupling $\brak{\omega \diamond \omega, H}$
	we defined in fact corresponds to the double It\=o-Wiener integral of $H$ with respect to
	the Gaussian measure $\eta_{\alpha,\mu}$.
\end{rmk}

We now extend \autoref{prop:sim} to manage stochastic processes, rather than just random variables.
\begin{prop} \label{prop:Franco}
	On a probability space $(\Omega,\F,\PP)$ consider a stochastic process $(\omega_t)_{t\in[0,T]}$
	with trajectories in $C([0,T],\H^{-1-\delta})$ such that the law of $\omega_t$ is $\eta_{\alpha,\mu}(d\omega)$ 
	for every $t \in [0,T]$. 
	Let $(H^n_\phi)_{n\in\N} \subseteq \H^{2+2\delta}(D \times D)$ be an approximation of $H_\phi$ in the sense of \autoref{prop:sim}. 
	Then the sequence of processes $t \mapsto \brak{\omega_t \otimes \omega_t,H^n_\phi}$ converges in $L^2([0,T],L^2(\PP))$. 
	Moreover, the limit does not depend on the approximating functions $H^n_\phi$.
\end{prop}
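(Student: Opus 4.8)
The plan is to upgrade the fixed-time convergence of \autoref{prop:sim} to an $L^2$-in-time statement, the essential point being that the one-dimensional time marginals of $(\omega_t)_{t\in[0,T]}$ are \emph{all} equal to $\eta_{\alpha,\mu}$, so that the fixed-time second-moment bounds of \autoref{prop:sim} become uniform in $t$ and the time variable plays no active role.

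First I would verify that each process $t \mapsto \brak{\omega_t \otimes \omega_t, H^n_\phi}$ is a well-defined, measurable element of $L^2([0,T],L^2(\PP))$. Since $H^n_\phi \in \H^{2+2\delta}(D\times D)$, the quadratic map $\omega \mapsto \brak{\omega\otimes\omega, H^n_\phi}$ defined through the duality \eqref{eq:duality} is continuous on $\H^{-1-\delta}$; composing it with the continuous trajectory $t \mapsto \omega_t \in \H^{-1-\delta}$ yields a continuous, hence jointly measurable, real-valued process. This legitimates the application of Tonelli's theorem below.

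Next, for each fixed $t$ the law of $\omega_t$ is $\eta_{\alpha,\mu}$, so the exact second-moment identity obtained in the proof of \autoref{prop:sim},
\[
\mathbb{E}\left[\brak{\omega_t\otimes\omega_t, H}^2\right] = \left(\sum_{l\in\Lambda}\F_l\F_l(H)\,\sigma_l^2\right)^2 + 2\sum_{l,l'\in\Lambda}\left(\F_l\F_{l'}(H)\right)^2 \sigma_l^2\sigma_{l'}^2,
\]
holds verbatim for $H=H^m_\phi-H^n_\phi$ and every $t$. Crucially, the right-hand side does \emph{not} depend on $t$: it is a deterministic functional of the Fourier coefficients of $H^m_\phi-H^n_\phi$ and of the variances $\sigma_l^2$ of $\eta_{\alpha,\mu}$. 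By \autoref{prop:sim} — precisely, by conditions \eqref{eq:002} and \eqref{eq:003} applied to the differences $H^m_\phi-H^n_\phi$ — this quantity tends to $0$ as $m,n\to\infty$. Integrating in time and invoking Tonelli then gives
\[
\int_0^T \mathbb{E}\left[\left(\brak{\omega_t\otimes\omega_t, H^m_\phi} - \brak{\omega_t\otimes\omega_t, H^n_\phi}\right)^2\right] dt = T\,\mathbb{E}\left[\brak{\omega\otimes\omega, H^m_\phi-H^n_\phi}^2\right] \xrightarrow[m,n\to\infty]{} 0,
\]
so the sequence is Cauchy in the complete space $L^2([0,T],L^2(\PP)) = L^2([0,T]\times\Omega)$ and therefore converges. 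Independence of the limit from the approximating sequence follows exactly as in \autoref{prop:sim}: given two admissible sequences, the same identity bounds the $L^2([0,T]\times\Omega)$-distance between the corresponding processes by $T$ times a quantity controlled through \eqref{eq:003}, forcing the two limits to coincide.

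The only genuinely delicate point is conceptual rather than computational: once one observes that the fixed-time estimate of \autoref{prop:sim} is uniform in $t$ because every marginal equals $\eta_{\alpha,\mu}$, the whole argument collapses to a Tonelli application layered on top of the already-established Cauchy property. The remaining ingredient not to be skipped is the measurability verification, which rests on the continuity of the trajectories of $(\omega_t)$ in $\H^{-1-\delta}$ together with the continuity of the pairing against $H^n_\phi\in\H^{2+2\delta}(D\times D)$.
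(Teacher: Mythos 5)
Your argument is correct and is precisely the one the paper intends: the published proof simply states that the claim ``is a direct consequence of \autoref{prop:sim} and stationarity of the process $\omega$,'' which is exactly your observation that the second-moment identity is independent of $t$ because every time marginal equals $\eta_{\alpha,\mu}$, so Tonelli turns the fixed-time Cauchy property into Cauchyness in $L^2([0,T]\times\Omega)$. Your explicit verification of measurability via continuity of trajectories and of the pairing against $H^n_\phi\in\H^{2+2\delta}(D\times D)$ is a detail the paper leaves implicit, and it is handled correctly.
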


The proof is a direct consequence of \autoref{prop:sim} and stationarity of the process $\omega$.
We are now ready to give the definition of solution we mentioned in \autoref{thm:maintheorem}.

\begin{definition} \label{def:bqgvorticity}
	A stochastic process $(U_t,\omega_t)_{t\in[0,T]}$ defined on a probability space $(\Omega,\F,\PP)$, 
	with trajectories in $C([0,T];\R \times \H^{-1-\delta})$, solves the reduced form \eqref{eq:bqgreduced}
	of \eqref{eq:bqg} in the \emph{weak vorticity formulation} if,
	for every test function $\phi \in C^\infty(D)$, $\PP$-almost surely, for every $t\in [0,T]$,
	\begin{gather}\label{eq:bqgweak}
	\brak{\omega_t, \phi} = \brak{\omega_0, \phi} + \int_0^t \brak{\omega_s \diamond \omega_s, H_\phi} \,ds + \int_0^t \brak{L \omega_s, \phi} \,ds,\\ \label{eq:bqgweaku}
	U_t-U_0=\int_0^t \dashint_D h \left( \de_x \Delta^{-1}\omega_s+\frac{\de_x}{\mu-\Delta}h\right)dzds,
	\end{gather}
	where the process $s \mapsto \brak{\omega_s \diamond \omega_s, H_\phi} $ is defined by \autoref{prop:Franco}.
\end{definition}

In the remainder of the paper we will focus on equations for centred variables $(U,\omega)$, 
and thus prove the following corresponding version of \autoref{thm:maintheorem}, from which
the latter is straightforwardly recovered.

\begin{thm}\label{thm:maintheoremcentred}
	Let $\beta\neq 0$ and $h$ as above. For any $\alpha,\mu>0$ there exists a stationary stochastic process 
	$(U_t,\omega_t)_{t\in [0,T]}$ with trajectories in $C([0,T],\R \times \H^{-1-\delta})$
	and fixed-time marginals $\eta_{\alpha,\mu}$,
	whose trajectories solve \eqref{eq:bqgreduced} in the weak vorticity formulation of \autoref{def:bqgvorticity}.
\end{thm}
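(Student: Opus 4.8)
The plan is to obtain $(U_t,\omega_t)$ as a limit point of stationary solutions of spectral Galerkin truncations of \eqref{eq:bqgreduced}, in the spirit of \cite{AlCr90,Fl18}. Let $P_N$ be the orthogonal projection onto the modes $(j,k)\in\Lambda$ with $j^2+k^2\le N$, and consider the truncated system $\de_t\omega^N=-P_N(\nabla^\perp\Delta^{-1}\omega^N\cdot\nabla\omega^N)-P_NL\omega^N$, coupled with $\dot U^N=\dashint_D h\,\de_x\Delta^{-1}\omega^N$ and $\omega^N=P_N\omega^N$. This is a polynomial ODE in finitely many Fourier coordinates, hence locally well posed by Cauchy--Lipschitz. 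The decisive structural fact, established by a direct computation in Fourier coordinates, is that the truncated drift $b^N$ leaves the Gaussian measure $\eta^N_{\alpha,\mu}$ (the law of the modes $j^2+k^2\le N$ under $\eta_{\alpha,\mu}$) invariant: this follows from the stationarity identity $\div b^N=\alpha\,b^N\cdot\nabla S_\mu$, which encodes at once Liouville's theorem for the antisymmetric transport terms and the exact conservation of the pseudoenergy $S_\mu$ of \eqref{eq:pseudoenergy}, the latter being visible in the antisymmetry $\frac1{|l|^2}-\frac1{|l'|^2}$ of the nonlinear coefficients and the compensation between the topographic coupling in $L\omega$ and the equation for $U$. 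Since $S_\mu$ is a positive-definite quadratic form, its conservation confines trajectories to compact ellipsoids and yields global existence. Sampling $(U^N_0,\omega^N_0)\sim\eta^N_{\alpha,\mu}$ therefore produces a stationary process $(U^N_t,\omega^N_t)_{t\in[0,T]}$ with fixed-time marginal $\eta^N_{\alpha,\mu}$.

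Next I would prove tightness of the laws of $(U^N,\omega^N)$ on $C([0,T],\R\times\H^{-1-\delta})$. Fix $0<\delta'<\delta$. Stationarity gives the uniform bounds $\EE\|\omega^N_t\|_{\H^{-1-\delta'}}^2=\EE\|\omega^N_0\|_{\H^{-1-\delta'}}^2\le C$ and $\EE|U^N_t|^2=1/(\alpha\mu)$, so each marginal lives in a space compactly embedded in $\R\times\H^{-1-\delta}$. For the time regularity I would test the equation against $\phi$: the affine part is controlled in $\H^{-2}$, while the Wick--Isserlis computation of \autoref{prop:sim} applies verbatim to the Gaussian marginal $\eta^N_{\alpha,\mu}$ and gives $\EE|\brak{\omega^N_s\otimes\omega^N_s,H^N_\phi}|^2\le C\|H_\phi\|_{\H^0(D\times D)}^2\le C\|\phi\|_{\H^2}^2$ uniformly in $N$ and $s$, where $H^N_\phi$ is the finite-mode truncation of $H_\phi$ associated with the Galerkin nonlinearity. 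Cauchy--Schwarz in time then yields $\EE|\brak{\omega^N_t-\omega^N_s,\phi}|^2\le C_\phi|t-s|^2$, and combined with the uniform spatial bound this gives tightness on $C([0,T],\R\times\H^{-1-\delta})$. By Prokhorov's theorem and the Skorokhod representation theorem I extract a subsequence and realise the processes on a common probability space so that $(U^N,\omega^N)\to(U,\omega)$ almost surely in $C([0,T],\R\times\H^{-1-\delta})$; stationarity and the fixed-time marginal $\eta_{\alpha,\mu}$ are inherited because $\eta^N_{\alpha,\mu}\to\eta_{\alpha,\mu}$.

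It then remains to verify that the limit satisfies \eqref{eq:bqgweak}--\eqref{eq:bqgweaku}, and this identification is the heart of the argument. The linear term $\brak{\omega_t,\phi}$ and the affine integral $\int_0^t\brak{L\omega_s,\phi}\,ds$ pass to the limit by continuity of the corresponding functionals on the path space together with uniform integrability supplied by the Gaussian moment bounds; the only slightly delicate contribution is the transport term $-\int_0^t U^N_s\brak{\omega^N_s,\de_x\phi}\,ds$, which converges because $U^N\to U$ uniformly in $t$ and $\brak{\omega^N_s,\de_x\phi}\to\brak{\omega_s,\de_x\phi}$ uniformly in $s$. Likewise \eqref{eq:bqgweaku} passes to the limit directly, its integrand being a continuous linear functional of $\omega_s$ tested against the smooth object $\de_x\Delta^{-1}h$. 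The genuine obstacle is the nonlinear term, since $\brak{\omega_s\otimes\omega_s,H_\phi}$ has no pathwise meaning for the limit $\omega$ and must be read as the double It\=o--Wiener integral $\brak{\omega_s\diamond\omega_s,H_\phi}$ of \autoref{prop:Franco}. Here I would exploit that $H_\phi$ has vanishing diagonal, $\F_l\F_l(H_\phi)=0$ for every $l\in\Lambda$, because of the factor $\frac1{|l|^2}-\frac1{|l'|^2}$ in its definition; the same holds for every finite-mode truncation $H^n_\phi$, so condition \eqref{eq:002} is automatic and $\brak{\omega\diamond\omega,H^n_\phi}=\brak{\omega\otimes\omega,H^n_\phi}$, the latter being a genuine (polynomial) functional of $\omega\in\H^{-1-\delta}$.

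The passage to the limit in the nonlinear integral then rests on interchanging the Galerkin index $N$ with the mollification index $n$. For fixed $n$, almost-sure convergence of $\omega^N$ together with the continuity of $\brak{\cdot\otimes\cdot,H^n_\phi}$ on $\H^{-1-\delta}$ and uniform integrability from the $L^2$ bounds give $\int_0^t\brak{\omega^N_s\otimes\omega^N_s,H^n_\phi}\,ds\to\int_0^t\brak{\omega_s\diamond\omega_s,H^n_\phi}\,ds$. The error between the Galerkin nonlinearity $\brak{\omega^N_s\otimes\omega^N_s,H^N_\phi}$ and $\brak{\omega^N_s\otimes\omega^N_s,H^n_\phi}$ is controlled, uniformly in $N$ and by the $L^2$-estimate of \autoref{prop:sim} applied to $H^N_\phi-H^n_\phi$, by a multiple of $\|H^N_\phi-H^n_\phi\|_{\H^0(D\times D)}^2$, which tends to $0$ as $n,N\to\infty$ since both truncations converge to $H_\phi$ in $\H^0(D\times D)$ by \autoref{prop:HphiL2}; finally $\brak{\omega_s\diamond\omega_s,H^n_\phi}\to\brak{\omega_s\diamond\omega_s,H_\phi}$ in $L^2$ by the very definition \eqref{eq:diamond} of the diamond product. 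Choosing $n=n(N)\to\infty$ suitably slowly and integrating in time using stationarity, I obtain convergence of the nonlinear integral, which establishes \eqref{eq:bqgweak} for the limit process and completes the proof of \autoref{thm:maintheoremcentred}.
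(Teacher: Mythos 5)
Your proposal is correct and follows the same overall route as the paper: a spectral Galerkin truncation preserving the projected Gaussian measure $\eta^N_{\alpha,\mu}$, uniform-in-$N$ bounds combining stationarity with a Wick--Isserlis estimate on the quadratic term, tightness on $C([0,T],\R\times\H^{-1-\delta})$, Prokhorov--Skorokhod extraction, and identification of the limit nonlinearity via a three-term decomposition that exploits the vanishing diagonal of $H_\phi$ (which you correctly observe makes condition \eqref{eq:002} automatic). Within this skeleton you make three local substitutions worth recording. First, for global well-posedness of the truncated system you invoke exact pathwise conservation of the pseudoenergy $S_\mu$, confining trajectories to compact ellipsoids and giving a flow for \emph{every} initial datum; the paper instead only verifies that the truncated drift is divergence-free with respect to $\eta^N_{\alpha,\mu}$ and appeals to Cruzeiro's non-explosion theorem \cite{Cr83}, which yields the flow for $\eta^N_{\alpha,\mu}$-a.e.\ datum. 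Your variant is stronger and sound---since $(1-\mu\Delta^{-1})\omega^N$ lies in the truncated space, the projections drop out of $\frac{d}{dt}S_\mu$ and the continuum conservation identity applies pointwise at trigonometric-polynomial states---but you assert the cancellation rather than verify it, much as the paper defers to \cite[Section 6.2]{MaTiVE01}; note also that the paper truncates $h$ to $h^N$ while you keep the full $h$, which is harmless (the $U$-equation integrands coincide by orthogonality) but deserves a remark. Second, your tightness argument via stationary marginal bounds plus the increment estimate $\EE|\brak{\omega^N_t-\omega^N_s,\phi}|^2\le C_\phi|t-s|^2$ carries the same content as the paper's bounds in $L^q([0,T],\R\times\H^{-1-\delta/2})\cap W^{1,2}([0,T],\R\times\H^{-3-\delta})$; to actually conclude you must still sum your per-$\phi$ bound over a weighted Fourier basis (using $\|e_js_k\|_{\H^2}^2\sim |m|^4$ against weights $(1+|m|^2)^{-3-\delta}$) to obtain a norm bound, and then invoke a compactness criterion interpolating space and time regularity---the paper uses Simon's theorem, and your phrase ``combined with the uniform spatial bound this gives tightness'' compresses exactly that step. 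Third, in the identification of the limit you are in one respect more careful than the paper: the Galerkin equation involves the truncated kernel $H^N_\phi$ rather than $H_\phi$, and your uniform $L^2$ control of $\brak{\omega^N\otimes\omega^N,H^N_\phi-H^n_\phi}$ by $\|H^N_\phi-H^n_\phi\|_{\H^0(D\times D)}$ with a diagonal choice $n=n(N)$ handles this mismatch explicitly, whereas the paper's equation \eqref{eq:004} writes $H_\phi$ directly and absorbs the discrepancy into its fixed-$M$ decomposition with the $L^1$ tail estimate. Both orderings of the limits are legitimate; the diagonal argument and the paper's ``first $N\to\infty$ at fixed $M$, then $M\to\infty$'' scheme buy the same conclusion.
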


\section{A Galerkin Approximation Scheme}\label{sec:galerkin}

Let us define the finite-dimensional projection of $L^2(D)$ 
onto the finite set of modes $\Lambda_N=\set{(j,k)\in\Lambda:j^2+k^2 \leq N}$,
\begin{equation}
\Pi_N:L^2(D)\ni f\mapsto \Pi_N f \coloneqq \sum_{(j,k)\in\Lambda_N} \F_{j,k}(f) e_j s_k \in \H_N,
\end{equation}
where we can identify the finite dimensional codomain with
\begin{align*}
\H_N&=\set{\sum_{(j,k)\in\Lambda_N} \xi_{j,k} e_j s_k\,:\, \xi_{j,k}=\overline{\xi_{-j,k}}}
\simeq \set{\xi\in \C^{\Lambda_N}: \xi_{j,k}=\overline{\xi_{-j,k}}} 
\simeq \C^{\tilde\Lambda_N},\\
\tilde\Lambda_N&=\set{(j,k)\in\Lambda:j\geq 0,j^2+k^2 \leq N}.
\end{align*}

\subsection{Truncated Barotropic Quasi-Geostrophic Equations}

Let $h^N=\Pi_N h$: we consider the following truncated version of \eqref{eq:bqgreduced},
\begin{equation} \label{eq:truncbqg}
\begin{cases}
\de_t\omega^N+ \Pi_N \left( \nabla^\perp \Delta^{-1}\omega^N\cdot \nabla\omega^N \right) + L_N\omega^N=0\\
\frac{d U^N}{dt}=\dashint_D h^N \de_x \Delta^{-1}\omega^N,
\end{cases}
\end{equation}
with $L_N\omega^N$ collecting affine terms in $\omega^N$:
\begin{align*}
L_N\omega^N &= \pa{U^N-\frac\beta\mu}\de_x\omega^N
+U^N \frac{\mu\de_x}{\mu-\Delta}h^N
+\Pi_N \left( \frac{\nabla^\perp}{\mu-\Delta}h^N\cdot\nabla\omega^N\right) \\
&+\Pi_N \left( \nabla^\perp\Delta^{-1}\omega^N \cdot \frac{\mu\nabla}{\mu-\Delta}h^N	\right)
+\beta \de_x\Delta^{-1}\omega^N.
\end{align*}
For the sake of simplicity, equations \eqref{eq:truncbqg} can be rewritten in the compact form
\begin{equation}
\de_t(U^N, \omega^N )= B^N(U^N, \omega^N),
\end{equation}
where $\omega^N$ is the vector with components $(\hat{\omega}^N_{j,k})_{(j,k)\in \tilde\Lambda_N}$,
and $B^N:\R\times \H_N\rightarrow \R\times \H_N$. Let us stress the fact that we can
reduce ourselves to consider Fourier modes in $\tilde\Lambda_N$ thanks to $\hat{\omega}^N_{j,k}=\obar{\hat{\omega}^N_{-j,k}}$.

Galerkin approximants \eqref{eq:truncbqg} are globally well-posed, and truncation is such that they preserve
the following projection of $\eta_{\alpha,\mu}$,
\begin{equation*}
\eta_{\alpha,\mu}^N=(\id_\R,\Pi_N)_\#\eta_{\alpha,\mu}.
\end{equation*}
In other words, under $\eta_{\alpha,\mu}^N$, $U^N$ has the same Gaussian distribution
of $U$ under $\eta_{\alpha,\mu}$, while $\omega^N$ is the projection of $\omega$ under $\eta_{\alpha,\mu}$.
More explicitly, we can define $\eta_{\alpha,\mu}^N$ by density with respect to the product Lebesgue measure
on $\H_N\simeq \tilde \Lambda_N$,
\begin{align*}
d\eta_{\alpha,\mu}^N(U^N,\omega^N)&=\frac1{Z^N_{\alpha,\mu}} 
e^{-\frac{\alpha\mu}{2}(U^N)^2}dU^N\\
&\qquad \times \prod_{(j,k)\in\tilde\Lambda_N} \exp\pa{-\frac\alpha2 |\hat{\omega}^N_{j,k}|^2
	\pa{1+\frac{\mu}{j^2+k^2}}} d\hat{\omega}^N_{j,k}.
\end{align*}

\begin{prop}\label{prop:galerkinwellposed}
	For $\eta_{\alpha,\mu}^N$-almost every initial datum $(U^N_0,\omega^N_0)$,
	there exists a unique solution $(U^N_t,\omega^N_t)\in C^\infty ([0,\infty),\R\times \H_N)$
	to the ordinary differential equation \eqref{eq:truncbqg}. 
	Moreover, the global flow preserves $\eta_{\alpha,\mu}^N$.
\end{prop}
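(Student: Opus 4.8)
The phase space $\R\times\H_N$ is finite-dimensional, and inspecting \eqref{eq:truncbqg} the field $B^N$ is a \emph{polynomial} in the coordinates $\pa{U^N,(\hat\omega^N_{j,k})_{(j,k)\in\tilde\Lambda_N}}$: the truncated advection is quadratic, while $L_N\omega^N$ and the topographic stress $\dashint_D h^N\de_x\Delta^{-1}\omega^N$ are affine, all the operators $\Delta^{-1},(\mu-\Delta)^{-1},\de_x$ acting as bounded Fourier multipliers on $\H_N$ (no small divisors, since $j^2+k^2\geq 1$ on $\Lambda$). Hence $B^N$ is smooth, and Cauchy--Lipschitz yields for every datum a unique maximal $C^\infty$ solution. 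The whole statement then reduces to two algebraic properties of $B^N$: the Liouville identity $\div B^N=0$ and conservation of the pseudoenergy $S_\mu$ of \eqref{eq:pseudoenergy} along trajectories. The first gives invariance of Lebesgue measure, the second both invariance of the Gibbs density and the a priori bound needed for global existence.

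\textbf{Liouville property.} I would compute $\div B^N$ in the real coordinates parametrising $\R\times\H_N$ ($\hat\omega_{0,k}\in\R$ and $\Re\hat\omega_{j,k},\Im\hat\omega_{j,k}$ for $j>0$). The $U^N$-equation contributes nothing, its right-hand side being independent of $U^N$. In the $\omega^N$-equation, the diagonal part of the Jacobian of the quadratic advection term sums to zero by the antisymmetry of the bilinear interaction --- exactly the structure that makes the Galerkin truncation of $2$D Euler measure-preserving --- and this cancellation is untouched by the orthogonal projection $\Pi_N$. The transport-type pieces of $L_N$ ($\de_x$ and $\beta\de_x\Delta^{-1}$) act on each complex mode as multiplication by an imaginary scalar, i.e.\ as an infinitesimal rotation of zero trace, hence do not contribute; the remaining $U^N$- and $h^N$-sourced terms are either independent of the variable differentiated or amount to advection by the divergence-free field $\tfrac{\nabla^\perp}{\mu-\Delta}h^N$, again trace-free.

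\textbf{Conservation and invariance.} Differentiating \eqref{eq:pseudoenergy} along \eqref{eq:bqgreduced}, I would use $1-\mu\Delta^{-1}=-(\mu-\Delta)\Delta^{-1}$ and split $\frac{d}{dt}S_\mu=\mu U\dot U+\brak{(1-\mu\Delta^{-1})\omega,\dot\omega}$. The quadratic term drops out by the truncated energy--enstrophy cancellation (here one uses that $\Pi_N$ is self-adjoint and $\omega^N\in\H_N$, together with $\div\nabla^\perp\Delta^{-1}\omega=0$). Among the affine terms, the $\beta\de_x\Delta^{-1}$ and $(U-\tfrac\beta\mu)\de_x$ contributions vanish because the pairing couples the self-adjoint multiplier $1-\mu\Delta^{-1}$ with skew-adjoint operators; the two $h^N$-coupling cross terms cancel each other; and the topographic stress $\mu U\dashint_D h^N\de_x\Delta^{-1}\omega^N$ is precisely compensated by $-U\brak{(1-\mu\Delta^{-1})\omega,\tfrac{\mu\de_x}{\mu-\Delta}h^N}$ after moving the multipliers onto $h^N$. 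Writing the density of $\eta^N_{\alpha,\mu}$ as $\rho=e^{-\alpha S_\mu}$ against Lebesgue measure, the two identities combine in the stationary continuity equation
\[
\div(\rho B^N)=\rho\big(\div B^N-\alpha\, B^N\cdot\nabla S_\mu\big)=\rho\Big(\div B^N-\alpha\,\tfrac{d}{dt}S_\mu\Big)=0,
\]
so the flow of $B^N$ leaves $\eta^N_{\alpha,\mu}$ invariant.

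\textbf{Global existence and main obstacle.} Since $S_\mu$ is a positive-definite quadratic form in $(U^N,\omega^N)$, its level sets are compact ellipsoids; conservation confines each trajectory to one such set, so the maximal solution is bounded and cannot blow up in finite time, extending to all $t\in[0,\infty)$ --- for every, in particular $\eta^N_{\alpha,\mu}$-almost every, initial datum. The genuine work, and the step I expect to be most delicate, is the pair of algebraic verifications above, and especially $\tfrac{d}{dt}S_\mu=0$: one must track the antisymmetry of the triple interaction through $\Pi_N$ and confirm the exact compensation of the beta-plane and topography terms, which is where the specific coupling between the $U^N$- and $\omega^N$-equations in \eqref{eq:truncbqg} and the precise form of $\eta^N_{\alpha,\mu}$ enter.
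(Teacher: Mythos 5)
Your proposal is correct, but it reaches the conclusion by a genuinely different route than the paper. The paper's proof is essentially a citation argument: after noting that $B^N$ is polynomial in $(U^N,\hat\omega^N_{j,k})$, hence smooth with all moments (of itself and its derivatives) finite under $\eta^N_{\alpha,\mu}$, it invokes Cruzeiro's non-explosion theorem \cite{Cr83}, which for a vector field with null divergence \emph{with respect to the Gaussian measure} --- the single identity $\div B^N-\alpha\,B^N\cdot\nabla S_\mu=0$, exactly the quantity $\div_{\eta^N_{\alpha,\mu}}B^N$ displayed in the paper --- yields simultaneously global existence for $\eta^N_{\alpha,\mu}$-a.e.\ datum and invariance of the measure; the divergence computation itself is delegated to \cite[Section 6.2]{MaTiVE01}. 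You instead split that weighted divergence into its two constituents, $\div B^N=0$ (Liouville with respect to Lebesgue measure) and $B^N\cdot\nabla S_\mu=0$ (exact conservation of the truncated pseudoenergy), and exploit the positive-definiteness of $S_\mu$ --- compact ellipsoidal level sets --- to obtain global existence for \emph{every} initial datum by classical ODE arguments, recovering invariance from the stationary continuity equation. Your cancellations are sound: the quadratic cancellation survives $\Pi_N$ because $(1-\mu\Delta^{-1})\omega^N\in\H_N$ and $\Pi_N$ is self-adjoint; the $\de_x$ and $\beta\de_x\Delta^{-1}$ pairings vanish since these multipliers are skew-adjoint and commute with $1-\mu\Delta^{-1}$; the two $h^N$ cross terms cancel against each other after integration by parts; and the topographic stress compensates the $U^N$-forcing via the identity $(1-\mu\Delta^{-1})\frac{\mu}{\mu-\Delta}=-\mu\Delta^{-1}$. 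What your route buys: it is self-contained (no recourse to \cite{Cr83}) and strictly stronger, removing the exceptional null set and showing the ``a.e.''\ in the statement is an artifact of the paper's method. What it costs: two separate algebraic identities instead of one, and one small slip in your Liouville paragraph --- the term $\Pi_N\pa{\nabla^\perp\Delta^{-1}\omega^N\cdot\frac{\mu\nabla}{\mu-\Delta}h^N}$ is linear in $\omega^N$ but is neither constant in the differentiated variable nor advection of $\omega^N$ by a fixed divergence-free field, so its tracelessness needs its own (easy) verification: its diagonal entries in the complex Fourier basis, arising from the $j''=0$ modes of $h^N$, are purely imaginary and hence contribute nothing to the real divergence. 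This is precisely part of the computation the paper outsources to \cite{MaTiVE01}, so it is a gap in exposition rather than in substance.
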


\begin{proof}
	The components of vector field $B^N$ are polynomials of $U^N,\hat{\omega}^N_{j,k}$, $(j,k)\in\tilde\Lambda_N$,
	and thus $B^N$ and its derivatives have finite moments of all orders under $\eta_{\alpha,\mu}^N$.	
	The thesis then follows from non-explosion results in \cite[Section 3]{Cr83}, as soon as we check
	that $B^N$ has null divergence with respect to $\eta_{\alpha,\mu}^N$, \emph{i.e.}
	\begin{align*}
	0=\div_{\eta_{\alpha,\mu}^N} B^N&=\de_{U_N}B^N_{U_N} +\sum_{(j,k)\in\tilde\Lambda_N} \de_{j,k} B^N_{j,k}\\
	&\qquad -\alpha\mu U_N B^N_{U_N}	
	-\alpha \sum_{(j,k)\in\tilde\Lambda_N}\pa{1+\frac{\mu}{j^2+k^2}} \hat{\omega}^N_{j,k} B^N_{j,k},
	\end{align*}
	subscripts denoting components (and derivatives) relative to $U_N$ or $\hat\omega_{j,k}$.
	In fact, \cite{Cr83} treats the case of a standard Gaussian measure on $\R^n$, but 
	their results are easily extended to our case.
	Showing that $B^N$ is divergence-free with respect to $\eta_{\alpha,\mu}^N$ can be done by
	direct computation: the full computation in the case of completely periodic geometry
	can be found in \cite[Section 6.2]{MaTiVE01}, to which we refer, the differences with
	our case being minimal.
\end{proof}

\subsection{The Truncated Nonlinear Term} \label{ssec:nonlinearitygalerkin}
In the finite-dimensional Galerkin truncation \eqref{eq:truncbqg} we can repeat the arguments
of \autoref{ssec:nonlinearity} to cast couplings of the nonlinear term into a double integral
formulation. For any $\phi \in C^\infty(D)$, expanding in Fourier series the equality
\begin{align*}
\brak{\Pi_N \pa{\nabla^\perp \Delta^{-1} \omega^N \cdot \nabla \omega^N},\phi}_{\H^0} 
&= \brak{\nabla^\perp \Delta^{-1} \omega^N \cdot \nabla \omega^N,\Pi_N\phi}_{\H^0}\\
&= \brak{\omega^N \otimes \omega^N,H^N_\phi}_{\H^0(D\times D)},
\end{align*}
we deduce a Fourier expansion of $H^N_\phi$,
\begin{align*}
\F_{{j},k}\F_{{j'},k'} H^N_\phi&={(j'k-jk')}\hat\phi_{{j+j'},k+k'} \frac{\mathbf{1}_{\{(j+j',k+k') \in \Lambda_N\}}}{4\imm}\pa{\frac1{j^2+k^2}-\frac1{j'^2+k'^2}} \\
&- (j'k+jk')\hat\phi_{{j+j'},k-k'}\frac{\mathbf{1}_{\{(j+j',k-k') 
		\in \Lambda_N\}}}{4\imm}\pa{\frac1{j^2+k^2}-\frac1{j'^2+k'^2}},	
\end{align*}
the computation being completely analogous to the one in \autoref{ssec:nonlinearity}.

\subsection{Compactness Results}\label{ssec:simon}
The first step towards taking the limit of Galerkin approximants as $N\rightarrow\infty$
is to provide estimates from which we can deduce relative compactness of approximations.

We begin by reviewing a deterministic compactness criterion due to Simon,
which allows us to control separately time and space regularity,
in the spirit of Aubin-Lions compactness Lemma.
We refer to \cite{Si87} for the result and the required generalities on Banach-valued Sobolev spaces.

\begin{prop}[Simon]
	Assume that
	\begin{itemize}
		\item $X\hookrightarrow B\hookrightarrow Y$ are Banach spaces such that the embedding $X\hookrightarrow Y$
		is compact and there exists $0<\theta<1$ such that for all $v\in X\cap Y$
		\begin{equation*}
		\norm{v}_B\leq M\norm{v}_X^{1-\theta} \norm{v}_Y^\theta;
		\end{equation*}
		\item $s_0,s_1\in\R$ are such that $s_\theta=(1-\theta)s_0+\theta s_1>0$.
	\end{itemize}
	If $\F \subset W$ is a bounded family in
	\begin{equation*}
	W=W^{s_0,r_0}([0,T],X)\cap W^{s_1,r_1}([0,T],Y)
	\end{equation*}
	with $r_0,r_1\in[0,\infty]$, and moreover
	\begin{equation*}
	s^*=s_\theta-\frac{1-\theta}{r_0}-\frac{\theta}{r_1}>0,
	\end{equation*}
	then if $\F$ is relatively compact in $C([0,T],B)$.
\end{prop}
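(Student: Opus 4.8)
The statement is Simon's compactness theorem, taken from \cite{Si87}; I sketch the proof one would give. The plan is to characterise relative compactness in $C([0,T],B)$ through the Arzel\`a--Ascoli theorem for Banach-valued continuous functions: a family in $C([0,T],B)$ is relatively compact precisely when it is uniformly equicontinuous and, for each $t\in[0,T]$, its section $\set{f(t):f\in\F}$ is relatively compact in $B$. That the elements of $\F$ admit continuous $B$-valued representatives will follow from the same embedding used for equicontinuity. I would verify the two conditions separately, equicontinuity being the genuine core of the argument.

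First I would record an Ehrling-type estimate. The compact embedding $X\hookrightarrow Y$ together with the interpolation inequality $\norm{v}_B\leq M\norm{v}_X^{1-\theta}\norm{v}_Y^\theta$ yields both that $X\hookrightarrow B$ is compact and that for every $\epsilon>0$ there is $C_\epsilon>0$ with $\norm{v}_B\leq \epsilon\norm{v}_X+C_\epsilon\norm{v}_Y$ for all $v\in X\cap Y$. This quantitative splitting is what converts bounds in the larger space $Y$ into compactness in $B$.

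For the pointwise relative compactness, fix $t$ and, for small $\delta>0$, compare $f(t)$ with its time average $\bar f_\delta(t)=\frac1\delta\int_t^{t+\delta}f(s)\,ds$, suitably reflected near the endpoint. On one hand $\norm{\bar f_\delta(t)}_X\leq \delta^{-1}\int_t^{t+\delta}\norm{f(s)}_X\,ds\leq C\,\delta^{-1/r_0}$ by H\"older's inequality and the $X$-bound, so $\set{\bar f_\delta(t):f\in\F}$ is bounded in $X$ and hence precompact in $B$ by the compact embedding $X\hookrightarrow B$. On the other hand $\norm{f(t)-\bar f_\delta(t)}_B\leq \delta^{-1}\int_t^{t+\delta}\norm{f(t)-f(s)}_B\,ds$ tends to $0$ uniformly in $f$ as $\delta\to0$, by the equicontinuity established below. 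Since $f(t)$ lies in a uniformly small $B$-ball around a precompact set for every $\delta$, the section $\set{f(t):f\in\F}$ is totally bounded, hence relatively compact, in $B$.

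The equicontinuity is the main obstacle, and is where the Banach-valued Sobolev machinery of \cite{Si87} is essential. Here I would interpolate the two time-regularity bounds: applying the pointwise inequality $\norm{\cdot}_B\leq M\norm{\cdot}_X^{1-\theta}\norm{\cdot}_Y^\theta$ to the fractional time differences of $f$ and using H\"older's inequality in time, one controls the family in the single Banach-valued fractional Sobolev space $W^{s_\theta,r_\theta}([0,T],B)$, where $1/r_\theta=(1-\theta)/r_0+\theta/r_1$, so that $s^*=s_\theta-1/r_\theta$. The hypothesis $s^*>0$ is precisely the threshold for the Sobolev--Slobodeckij embedding $W^{s_\theta,r_\theta}([0,T],B)\hookrightarrow C^{0,s^*}([0,T],B)$, which delivers a uniform H\"older modulus of continuity in $B$, hence the required equicontinuity. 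The delicate points are the vector-valued Gagliardo--Nirenberg-type interpolation of fractional-order norms and the endpoint cases $r_0,r_1\in\set{\infty}$ or non-positive $s_0,s_1$; these are handled by the difference-quotient definitions and the general embedding results of \cite{Si87}.
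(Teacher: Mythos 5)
The paper offers no proof of this proposition at all: it is quoted as a known result from Simon \cite{Si87}, so there is no internal argument to compare against, and your sketch is a correct outline of the proof in that cited source (Arzel\`a--Ascoli in $C([0,T],B)$, the Ehrling splitting from the compact embedding plus interpolation, pointwise precompactness via time averages, and equicontinuity via interpolation of the vector-valued norms into $W^{s_\theta,r_\theta}([0,T],B)$ with $1/r_\theta=(1-\theta)/r_0+\theta/r_1$ followed by the embedding into $C^{0,s^*}([0,T],B)$). The only step stated too quickly is the $X$-bound $\norm{\bar f_\delta(t)}_X\leq C\delta^{-1/r_0}$ on the time averages, which implicitly assumes $s_0\geq 0$ (so that boundedness in $W^{s_0,r_0}([0,T],X)$ gives boundedness in $L^{r_0}([0,T],X)$); for negative $s_0$ --- allowed by the hypotheses, since only $s_\theta>0$ is required --- the averaging must be replaced by mollification with a loss $\delta^{s_0}$ as in Simon's general treatment, a point you do flag and which is in any case immaterial for the paper's application, where $s_0=0$ and $s_1=1$.
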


Let us specialise this result to our framework. Take
\begin{equation*}
X=\R \times \H^{-1-\delta/2}, \quad B=\R \times \H^{-1-\delta}, \quad Y=\R \times \H^{-3-\delta},
\end{equation*}
with $\delta>0$: by Gagliardo-Niremberg estimates the interpolation inequality is satisfied with $\theta=\delta/2$.
Let us take moreover $s_0=0$, $s_1=1$, $r_1=2$ and $r_0=q\geq1$; if we can take $q$ large such that
\begin{equation*}
s^*=\frac\delta4-\frac{2-\delta}{2q}>0,
\end{equation*}
then the hypothesis are satisfied and obtain:
\begin{cor}\label{cor:simon}
	Let $\delta>0$. If a family of functions
	\begin{equation*}
	\set{v_n}\subset L^q([0,T],\R \times \H^{-1-\delta/2})\cap W^{1,2}([0,T],\R \times \H^{-3-\delta})
	\end{equation*}
	is bounded for any $q\geq 1$, then it is relatively compact in 	$C([0,T],\R \times \H^{-1-\delta})$.
	
	As a consequence, if a sequence of stochastic processes $u^n:[0,T]\rightarrow \R \times \H^{-1-\delta}$, $n\in\N$, 
	defined on a probability space $(\Omega,\F,\PP)$ is such that, for any $q\geq1$,
	there exists a constant $C_{T,\delta,q}$ for which
	\begin{equation}\label{eq:momentscondition}
	\sup_n \expt{\norm{u^n(t)}^p_{ L^q([0,T],\R \times \H^{-1-\delta/2})}+\norm{u^n}_{W^{1,2}([0,T],\H^{-3-\delta})}}
	\leq C_{T,\delta,q},
	\end{equation} 
	then the laws of $u^n$ on $C([0,T],\R \times \H^{-1-\delta})$ are tight.
\end{cor}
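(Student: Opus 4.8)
The plan is to read off both claims from the Simon criterion stated above, together with the specialisation of parameters carried out just before the statement. For the deterministic assertion there is essentially nothing left to verify: with $X=\R\times\H^{-1-\delta/2}$, $B=\R\times\H^{-1-\delta}$, $Y=\R\times\H^{-3-\delta}$ the Sobolev indices strictly decrease, so $X\hookrightarrow B\hookrightarrow Y$ is a chain of compact embeddings; the interpolation inequality holds with the exponent $\theta$ recorded above; and the choice $s_0=0$, $s_1=1$, $r_1=2$, $r_0=q$ gives $s^*>0$ once $q$ is large. Since $\set{v_n}$ is bounded in $L^q([0,T],X)\cap W^{1,2}([0,T],Y)$ for every $q$, I would fix one such large $q$ and apply Simon's Proposition verbatim to conclude relative compactness in $C([0,T],B)$; a single large value of $q$ is all that is actually used.

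For the probabilistic consequence I would argue by Prokhorov's theorem, exhibiting for each $R>0$ a compact subset of $C([0,T],\R\times\H^{-1-\delta})$ that absorbs almost all the mass of every $u^n$. Fixing a large $q$ as above, for $R>0$ let $B_R$ be the set of all $v$ with $\norm{v}_{L^q([0,T],\R\times\H^{-1-\delta/2})}\leq R$ and $\norm{v}_{W^{1,2}([0,T],\R\times\H^{-3-\delta})}\leq R$, and let $K_R$ be the closure of $B_R$ in $C([0,T],\R\times\H^{-1-\delta})$. By the deterministic part $B_R$ is relatively compact, hence $K_R$ is compact. Using $B_R\subseteq K_R$ and splitting the event according to which of the two norm constraints is violated, Markov's inequality applied separately to the two nonnegative summands bounded in \eqref{eq:momentscondition} gives $\sup_n\prob{u^n\notin K_R}\leq C_{T,\delta,q}R^{-p}+C_{T,\delta,q}R^{-1}$, which tends to $0$ as $R\to\infty$. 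This is precisely the tightness of the laws of $u^n$ on $C([0,T],\R\times\H^{-1-\delta})$.

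There is no serious obstacle here: the analytic substance is entirely contained in Simon's criterion, and what remains is essentially bookkeeping. The only points demanding a little care are that $K_R$ be genuinely compact, not merely relatively compact (handled by passing to the closure), and that the mismatched powers in \eqref{eq:momentscondition}---a $p$-th power on the $L^q$-norm but a first power on the $W^{1,2}$-norm---be accounted for correctly, which is why the two summands are treated separately in the Markov estimate above.
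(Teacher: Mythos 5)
Your proposal is correct and follows the paper's own route: the deterministic part is exactly the specialisation of Simon's criterion carried out before the statement (fixing one sufficiently large $q$ is indeed all that is needed), and the probabilistic part is the Chebyshev--Markov/Prokhorov argument the paper dismisses in one line, which you merely spell out in detail, including the harmless mismatch of powers in \eqref{eq:momentscondition}. No gaps.
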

For the sake of completeness we remark that the second, probabilistic part of the latter
statement follows from the deterministic one and a simple application of Chebyshev inequality. 

We want to apply \autoref{cor:simon} to the sequence of finite dimensional Galerkin approximations
we built in \autoref{prop:galerkinwellposed}.
To obtain the uniform bound \eqref{eq:momentscondition}, let us begin with the ``space regularity'' part: 
by stationarity of the process $(U^N,\omega^N)$ we can swap expectations and time integrals, so that
\begin{align*}
&\expt{\norm{U^N}^p_{L^q([0,T])} +\norm{\omega^N}^p_{L^q([0,T],\H^{-1-\delta/2})}}
\leq T \expt{\abs{U^N}^p +\norm{\omega^N}^p_{\H^{-1-\delta/2}}}\\
&\qquad \leq T \int \pa{\abs{U}^p+\norm{\omega}^p_{\H^{-1-\delta/2}}}d\eta_{\alpha,\mu}(dU,d\omega)
\leq C_{T,p,\alpha,\mu}.
\end{align*}
As for bounds on time regularity: starting with $U^N$, by the evolution equation
\begin{align*}
&\|U^N\|^2_{W^{1,2}([0,T])} = 
\|U^N\|^2_{L^2([0,T])}+ \left\|\frac{dU^N}{dt}\right\|^2_{L^2([0,T])}   \\
&\leq \int_0^T \left( |U^N_t|^2 + \left| \dashint_D h^N  \partial_x \Delta
^{-1}\omega^N_t  \right|^2 \right)\,dt,
\end{align*}
from which we deduce, using that $\omega^N$ has marginals $\eta^N_{\alpha,\mu}(d\omega^N)$ for every fixed time $t$,
and that $h\in C^\infty(D)$,
\begin{align*}
\mathbb{E} \left[ \|U^N\|^2_{W^{1,2}([0,T])} \right] 
&\leq C_{T,h} \left( 1+ \mathbb{E} \left[ \|\omega^N\|^2_{\H^{-1-\delta}} \right] \right) \\
&\leq  C_{T,h} \left( 1+ \mathbb{E} \left[ \|\omega\|^2_{\H^{-1-\delta}} \right] \right)
\leq C_{T,\alpha,\mu,h}. 
\end{align*}
Let us now focus on time regularity of $\omega^N$: we have
\begin{align*}
&\|\omega^N\|^2_{W^{1,2}([0,T],\H^{-3-\delta})} = \|\omega^N\|^2_{L^2([0,T],\H^{-3-\delta})}+ \|\de_t\omega^N\|^2_{L^2([0,T],\H^{-3-\delta})} \\
&\leq 2 \int_0^T \left( \|\omega^N_t\|^2_{\H^{-3-\delta}} + \| \Pi_N \left( \nabla^\perp \Delta^{-1} \omega^N_t \cdot \nabla\omega^N_t \right) \|^2_{\H^{-3-\delta}} + \|L_N \omega^N_t\|^2_{\H^{-3-\delta}}\right)\,dt.
\end{align*}
The affine term is controlled at any fixed time $t$ by
\begin{align*}
\mathbb{E} \left[ \|L_N \omega^N_t\|^2_{\H^{-3-\delta}} \right]& 
\leq  C_{\alpha,\mu,h} \left( 1+ \mathbb{E} \left[ \|\omega^N\|^2_{\H^{-1-\delta}} \right] \right) \\
&\leq  C_{\alpha,\mu,h} \left( 1+ \mathbb{E} \left[ \|\omega\|^2_{\H^{-1-\delta}} \right] \right)\leq C_{\alpha,\mu,h}.
\end{align*}
The quadratic term is the one forcing us to consider a large Hilbert space such as $\H^{-3-\delta}$.
As above, we denote $m = (j,k) \in \Lambda_N$. We set $\phi_m = e_j s_k$ and consider
\begin{equation*}
\mathbb{E} \left[ \brak{\omega^N \otimes \omega^N, H^N_{\phi_m}}^2\right] = 
\mathbb{E} \left[ \left( \sum_{l,l' \in \Lambda_N} \F_{l} \F_{l'} (H_{\phi_m}) 
\overline{\hat\omega^N_l \hat\omega^N_{l'}} \right)^2 \right], 
\end{equation*}
where, by the expansion we derived in \autoref{ssec:nonlinearitygalerkin},
\begin{equation*}
\F_{l} \F_{l'} (H^N_{\phi_m}) = -l^\perp \cdot l' \frac{\mathbf{1}_{\{l+l'=m\}}}{4\imm} \left( \frac{1}{|l|^2} - \frac{1}{|l'|^2} \right) + l^\perp \cdot l' \frac{\mathbf{1}_{\{l-l'=m\}}}{4\imm} \left( \frac{1}{|l|^2} - \frac{1}{|l'|^2} \right).
\end{equation*}
We can consider only the first contribution of the latter sum, 
since, up to a constant, we can bound the contribution of the sum with the contributions of the sole first term, 
similarly to what we did in the proof of \autoref{prop:HphiL2}. We obtain:
\begin{align}\label{eq:005}
&\expt{\brak{\omega^N \otimes \omega^N, H^N_{\phi_m}}^2}
\leq C \sum_{\substack{l,h \in \Lambda_N \\ l,h \neq m}} 
l^\perp \cdot (m-l) \left(\frac{1}{|l|^2} - \frac{1}{|m-l|^2} \right)\\ \nonumber
&\qquad \times h^\perp \cdot (m-h) \left(\frac{1}{|h|^2} - \frac{1}{|m-h|^2} \right)
\mathbb{E} \left[ \overline{\hat\omega^N_l \hat\omega^N_{m-l}  \hat\omega^N_{h}  \hat\omega^N_{m-h}} \right].
\end{align}
By Wick-Isserlis Formula the expected value on the right-hand side is given by
\begin{align}\nonumber
&\mathbb{E} \left[ \overline{\hat\omega^N_l \hat\omega^N_{m-l}  \hat\omega^N_{h}  \hat\omega^N_{m-h}} \right]\\ 
\nonumber
&\qquad= \sigma^2_l \sigma^2_h \delta_{l,m-l}\delta_{h,m-h}+
\sigma^2_l \sigma^2_{m-l} \delta_{l,h}\delta_{m-l,m-h}+\sigma^2_l \sigma^2_h \delta_{l,m-h}\delta_{m-l,h} \\
\label{eq:006}
&\qquad= \sigma^2_l \sigma^2_h \delta_{l,m-l}\delta_{h,m-h}+
\sigma^2_l \sigma^2_{m-l} \delta_{l,h}+\sigma^2_l \sigma^2_h \delta_{l,m-h}.
\end{align}
Notice that if $l = m-l$ we have $l^\perp (m-l)=0$, 
hence the first summand in \eqref{eq:006} does not play any role in the computation of \eqref{eq:005}.
Moreover, it is easy to check that the second and third terms give the same contribution, since $l^\perp \cdot h = - h^\perp \cdot l$. Therefore, applying inequalities \eqref{eq:007},\eqref{eq:008},
\begin{align*}
\mathbb{E} \left[ \brak{\omega^N \otimes \omega^N, H^N_{\phi_m}}^2\right] 
&\leq C \sum_{\substack{l \in \Lambda_N\\l \neq m}} \sigma^2_l \sigma^2_{m-l} \left(l^\perp \cdot (m-l) \pa{\frac{1}{|l|^2} - \frac{1}{|m-l|^2}} \right)^2\\
&\leq C\sum_{\substack{l \in \Lambda_N\\ l\neq m}}  \sigma^2_l \sigma^2_{m-l} \left( \frac{|l|^2 |m|^4 |m-l|^2}{|l|^4|m-l|^4} + \frac{|l|^4 |m|^4}{|l|^4|m-l|^4}\right)\\
&=C|m|^4 \sum_{\substack{l \in \Lambda_N\\ l\neq m}}  \sigma^2_l \sigma^2_{m-l} \left(\frac{1}{|l|^2|m-l|^2} + \frac{1}{|m-l|^4} \right) \\
&\leq C |m|^4 \sum_{l \in \Lambda_N} \frac{ \sigma^2_l \sigma^2_{m-l}}{|l|^4}. 
\end{align*}
Recall now the expression for $\sigma^2_l$:
\begin{align*}
\sigma^2_l = \frac{|l|^2}{\alpha(\mu+|l|^2)},
\end{align*}
which is smaller than $\alpha^{-1}$ for every $l$. Therefore
$ \sum_{l \in \Lambda_N} \frac{ \sigma^2_l \sigma^2_{m-l}}{|l|^4}$ is bounded form above uniformly in $m\in \Lambda_N, N \in \mathbb{N}$. Hence
\begin{align*}
\mathbb{E} \left[ \| \Pi_N \left( \nabla^\perp \Delta^{-1} \omega^N \cdot \nabla \omega^N \right)\|^2_{\H^{-3-\delta}}\right] &\leq C \sum_{m \in \Lambda_N} \frac1{(1+|m|^2)^{3+\delta}} \mathbb{E} \left[ \brak{\omega^N \otimes \omega^N, H^N_{\phi_m}}^2\right] \\
&\leq   c_{\delta} \sum_{m \in \Lambda_N} \frac{|m|^4}{(1+|m|^2)^{3+\delta}} \leq C_{\delta},
\end{align*}
where $C_{\delta}$ is a finite constant which does not depend on $N$. All in all, we arrive to
\begin{align*}
\mathbb{E} \left[ \|\omega^N\|^2_{W^{1,2}([0,T],\H^{-3-\delta})} \right] \leq C_{T,\alpha,\mu,h} \left( 1 + \mathbb{E} \left[ \|\omega\|^2_{\H^{-1-\delta}}\right] \right).
\end{align*}
The estimates made so far, combined with \autoref{cor:simon}, lead us finally to:
\begin{lem}
	The laws $\Theta^N_{\alpha,\mu}$ of the sequence of processes $u^N=(U^N_t,\omega^N_t)_{t\in T}$ defined by \autoref{prop:galerkinwellposed} are tight on $C([0,T],\R \times \H^{-1-\delta})$.
\end{lem}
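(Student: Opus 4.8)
The plan is to verify the hypotheses of \autoref{cor:simon} for the sequence $u^N=(U^N,\omega^N)$, that is, to establish the uniform moment bound \eqref{eq:momentscondition}; tightness then follows at once from the Chebyshev argument recalled after that corollary. I would split the estimate into a ``spatial'' contribution, the $L^q([0,T],\R\times\H^{-1-\delta/2})$ norm, and a ``temporal'' contribution, the $W^{1,2}([0,T],\R\times\H^{-3-\delta})$ norm, treating the two separately.

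For the spatial part the crucial input is stationarity: by \autoref{prop:galerkinwellposed} the Galerkin flow preserves $\eta^N_{\alpha,\mu}$, so the law of $(U^N_t,\omega^N_t)$ is $\eta^N_{\alpha,\mu}$ at every fixed $t$. This lets me exchange expectation and time integral and reduce to a single-time Gaussian moment. Since $\eta^N_{\alpha,\mu}=(\id_\R,\Pi_N)_\#\eta_{\alpha,\mu}$, each such moment is dominated by the corresponding moment under $\eta_{\alpha,\mu}$, uniformly in $N$; and because $\omega\in\H^{-1-\delta/2}$ almost surely with Gaussian tails, all its $\H^{-1-\delta/2}$-moments are finite. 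The same stationarity plus Gaussianity disposes of the $L^2$-in-time part of the temporal norm and of the $U^N$ time derivative, since $\frac{dU^N}{dt}=\dashint_D h^N\de_x\Delta^{-1}\omega^N$ is linear in $\omega^N$ through the smoothing operator $\de_x\Delta^{-1}$ with smooth coefficient $h$, hence controlled by $\norm{\omega^N}_{\H^{-1-\delta}}$. Likewise the affine part $L_N\omega^N$ of $\de_t\omega^N$, being linear in $\omega^N$ with smooth coefficients, is bounded in $\H^{-3-\delta}$ by $\norm{\omega^N}_{\H^{-1-\delta}}$ and poses no difficulty.

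The main obstacle, and the reason the large space $\H^{-3-\delta}$ is needed, is the second moment of the quadratic term $\Pi_N(\nabla^\perp\Delta^{-1}\omega^N\cdot\nabla\omega^N)$. I would expand its $\H^{-3-\delta}$ norm in Fourier modes as $\sum_m(1+|m|^2)^{-3-\delta}\abs{\brak{\omega^N\otimes\omega^N,H^N_{\phi_m}}}^2$ with $\phi_m=e_js_k$, and compute $\expt{\brak{\omega^N\otimes\omega^N,H^N_{\phi_m}}^2}$ by the Wick--Isserlis formula, which expresses the fourth Gaussian moment through the covariances $\sigma_l^2$. The structural cancellations of the kernel are what make this work: the fully contracted term vanishes because $l^\perp\cdot(m-l)=0$ when $l=m-l$, while the two remaining contractions contribute equally thanks to $l^\perp\cdot h=-h^\perp\cdot l$. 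Using the kernel bounds \eqref{eq:007}--\eqref{eq:008} and the key fact $\sigma_l^2=|l|^2/(\alpha(\mu+|l|^2))\le\alpha^{-1}$, the resulting sum $\sum_l\sigma_l^2\sigma_{m-l}^2/|l|^4$ converges uniformly in $m$ and $N$, yielding $\expt{\brak{\omega^N\otimes\omega^N,H^N_{\phi_m}}^2}\le C|m|^4$.

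Finally, I would sum this bound against the weight $(1+|m|^2)^{-3-\delta}$: on the two-dimensional lattice $\Lambda$ the series $\sum_m|m|^4(1+|m|^2)^{-3-\delta}$ behaves like $\sum_m|m|^{-2-2\delta}$ and hence converges, giving an $N$-independent bound on $\expt{\norm{\Pi_N(\nabla^\perp\Delta^{-1}\omega^N\cdot\nabla\omega^N)}^2_{\H^{-3-\delta}}}$. Collecting the spatial, time-derivative and nonlinear estimates verifies \eqref{eq:momentscondition}, and \autoref{cor:simon} then delivers tightness of the laws $\Theta^N_{\alpha,\mu}$ on $C([0,T],\R\times\H^{-1-\delta})$. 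The only genuinely delicate point is the uniform-in-$N$ control of the quadratic term; everything else reduces to stationarity and finiteness of Gaussian moments.
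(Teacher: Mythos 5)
Your proposal is correct and follows essentially the same route as the paper: Simon's criterion via the moment bound \eqref{eq:momentscondition}, with the spatial and $L^2$-in-time parts handled by stationarity of $\eta^N_{\alpha,\mu}$ and Gaussian moments, and the quadratic term estimated through the Wick--Isserlis formula with the same cancellations ($l^\perp\cdot(m-l)=0$ on the diagonal contraction, the two remaining contractions equal), the kernel bounds \eqref{eq:007}--\eqref{eq:008}, $\sigma_l^2\leq\alpha^{-1}$, and the convergent weighted sum $\sum_m|m|^4(1+|m|^2)^{-3-\delta}$. No gaps; this matches the paper's argument step for step.
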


\subsection{The Continuous Limit}

By Prokhorov theorem there exists a subsequence of $\Theta_{\alpha,\mu}^N$ 
--with a slight abuse of notation we will denote it with the same symbol-- 
weakly converging to a probability measure $\Theta_{\alpha,\mu}$ on $C([0,T],\R \times \H^{-1-\delta})$. 
By Skorokhod theorem, there exists a new probability space $(\tilde{\Omega},\tilde{\mathcal{F}},\tilde{\PP})$ 
and random variables $\tilde{u}^N$, $\tilde{u}$ with values in $C([0,T],\R \times \H^{-1-\delta})$ such that:
\begin{itemize}
	\item the law of $\tilde{u}^N$ (resp. $\tilde u$) is $\Theta_{\alpha,\mu}^N$ (resp. $\Theta_{\alpha,\mu}$);
	\item  $\tilde{u}^N$ converges to  $\tilde u$ $\tilde{\PP}$-almost surely. 
\end{itemize}
In order to lighten notation, we will drop tilde superscripts in the following.

The aim of this final paragraph is to prove that the stochastic process $u$ is a weak solution of \eqref{eq:bqg} 
in the sense of  \autoref{def:bqgvorticity}, thus concluding the proof of \autoref{thm:maintheoremcentred}. 
First of all, we make the following fundamental observation.

\begin{lem}
	The Galerkin approximations $u^N = (U^N,\omega^N)$ solve \eqref{eq:truncbqg} in the sense of \autoref{def:bqgvorticity}. More precisely, given any test function $\phi \in C^\infty(D)$,
	\begin{equation} \label{eq:004}
	\brak{\omega^N_t,\phi} = \brak{\omega^N_0,\phi} + \int_0^t \brak{\omega_s^N \otimes \omega_s^N, H_{\phi}} \,ds + \int_0^t \brak{L_N\omega_s^N , {\phi}} \,ds,
	\end{equation}
\end{lem}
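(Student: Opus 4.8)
The plan is to verify the integral identity \eqref{eq:004} directly, exploiting the fact that each Galerkin approximation is a genuine classical solution of the finite-dimensional system \eqref{eq:truncbqg}, so that no probabilistic renormalisation is needed at this stage. By \autoref{prop:galerkinwellposed}, for $\eta^N_{\alpha,\mu}$-almost every initial datum the trajectory $(U^N_t,\omega^N_t)$ belongs to $C^\infty([0,\infty),\R\times\H_N)$; in particular, for any fixed $\phi\in C^\infty(D)$ the scalar map $t\mapsto\brak{\omega^N_t,\phi}$ is continuously differentiable, so the fundamental theorem of calculus applies and it suffices to identify its derivative with the integrand of \eqref{eq:004}. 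First I would pair the evolution equation for $\omega^N$ in \eqref{eq:truncbqg} with $\phi$ in the $\H^0$ duality, obtaining
\[
\frac{d}{dt}\brak{\omega^N_t,\phi}=-\brak{\Pi_N\pa{\nabla^\perp\Delta^{-1}\omega^N_t\cdot\nabla\omega^N_t},\phi}-\brak{L_N\omega^N_t,\phi}.
\]

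The core of the argument is to recast the nonlinear coupling into the bilinear kernel form. Since $\Pi_N$ is the orthogonal projection of $\H^0$ onto $\H_N$, hence self-adjoint and idempotent, I would move the projection onto the test function,
\[
\brak{\Pi_N\pa{\nabla^\perp\Delta^{-1}\omega^N_t\cdot\nabla\omega^N_t},\phi}=\brak{\nabla^\perp\Delta^{-1}\omega^N_t\cdot\nabla\omega^N_t,\Pi_N\phi},
\]
and then invoke the Fourier identity \eqref{eq:Hphi}, which for the smooth finite-mode field $\omega^N_t$ holds pathwise with the ordinary tensor coupling \eqref{eq:duality} (the renormalised product $\diamond$ of \eqref{eq:diamond} being unnecessary here, as $\omega^N_t$ is an honest function and not a distribution). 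This produces exactly $\brak{\omega^N_t\otimes\omega^N_t,H_{\Pi_N\phi}}$; by the computation of Subsection~\ref{ssec:nonlinearitygalerkin} the kernel $H_{\Pi_N\phi}$ is precisely the truncated kernel $H^N_\phi$ derived there, the indicator functions on the output modes $(j+j',k\pm k')\in\Lambda_N$ being nothing but the Fourier support of $\Pi_N\phi$. Because $\omega^N_t$ is band-limited to $\Lambda_N$, only modes $l,l'\in\Lambda_N$ contribute, so the coupling is a finite double sum and is well defined without any limiting procedure. This identifies the nonlinear integrand of \eqref{eq:004} with the kernel of Subsection~\ref{ssec:nonlinearitygalerkin}.

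With the nonlinearity identified, the affine term requires nothing further, $\brak{L_N\omega^N_t,\phi}$ being already linear and continuous in $\omega^N_t$; integrating the displayed ODE in time from $0$ to $t$ then yields \eqref{eq:004} once the sign and kernel conventions fixed in passing from \eqref{eq:bqg} to the reduced system \eqref{eq:bqgreduced} are taken into account. The same scheme applied verbatim to the second equation of \eqref{eq:truncbqg} recovers \eqref{eq:bqgweaku} for the component $(U^N_t)$. I expect the only delicate point to be the bookkeeping in the nonlinear step: one must respect the reality constraint $\hat\omega^N_{-j,k}=\overline{\hat\omega^N_{j,k}}$ and track the exact placement of $\Pi_N$, so that the coupling is matched with the truncated kernel $H^N_\phi=H_{\Pi_N\phi}$ of Subsection~\ref{ssec:nonlinearitygalerkin} rather than with the full $H_\phi$ of Subsection~\ref{ssec:nonlinearity}; everything else reduces to the fundamental theorem of calculus for the smooth Galerkin flow.
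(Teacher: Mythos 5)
Your proof is correct and is essentially the argument the paper leaves implicit---its own proof is a one-line reference to the computation of \autoref{ssec:nonlinearity}---namely: the Galerkin trajectory is smooth by \autoref{prop:galerkinwellposed}, so the fundamental theorem of calculus applies, and self-adjointness of $\Pi_N$ together with the pathwise Fourier identity \eqref{eq:Hphi} (no renormalisation needed for the band-limited field) turns the nonlinear coupling into the finite tensor pairing with $H_{\Pi_N\phi}=H^N_\phi$. Your closing remark is moreover a genuine correction of the printed statement: as written, \eqref{eq:004} displays $H_\phi$, but the truncated dynamics \eqref{eq:truncbqg} only yields the identity with the truncated kernel $H^N_\phi$ (the two couplings differ on modes $l,l'\in\Lambda_N$ with $l\pm l'\notin\Lambda_N$), a slip that is harmless for the continuous limit since $\brak{\omega^N\otimes\omega^N,H_\phi-H^N_\phi}$ is controlled uniformly in $N$ by the same $L^1$ estimate applied to $H_\phi-H^M_\phi$ in the proof of \autoref{thm:maintheoremcentred}.
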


\begin{proof}
	This follows from the discussion made in \autoref{ssec:nonlinearity}.
\end{proof}

\begin{proof}[Proof of \autoref{thm:maintheoremcentred}]
	All but the bilinear term in \eqref{eq:004} converge almost surely 
	because of the convergence of $\omega^N \to \omega$ in $C([0,T],\H^{-1-\delta})$
	and continuity of duality coupling with $\phi$. 
	The almost sure convergence of $U^N$ to $U$ solving \eqref{eq:bqgweaku} follows similarly. 
	Let us thus focus on convergence of the nonlinearity. 
	For any given $\phi \in C^\infty(D)$ and $M \in \N$ it holds
	\begin{align*}
	\int_0^t \brak{\omega_s^N \otimes \omega_s^N, H_{\phi}} \,ds &= 
	\int_0^t \brak{\omega_s^N \otimes \omega_s^N,H_\phi - H^M_\phi} \,ds \\
	&\quad+  \int_0^t \brak{\omega_s^N \otimes \omega_s^N - \omega_s \otimes \omega_s,H^M_\phi} \,ds \\
	&\quad +  \int_0^t \brak{\omega_s \otimes \omega_s,H^M_\phi} \,ds.
	\end{align*}
	For the first term on the right-hand side we have the following $L^1$ estimate: 
	\begin{align*}
	\mathbb{E}& \left[ | \brak{\omega^N \otimes \omega^N ,H_\phi - H_\phi^M}|\right] \leq \mathbb{E} \left[  \sum_{m \in \Lambda} |\hat{\phi}_m |  \left|\brak{\omega^N \otimes \omega^N ,H_{\phi_m} - H_{\phi_m}^M}\right|\right] \\
	&\leq \left( \sum_{m \in \Lambda}|\hat{\phi}_m|^2 (1+|m|^2)^\beta \right)^{1/2} 
	\left(\sum_{m \in \Lambda} \frac{\mathbb{E} \left[\left| \brak{\omega^N \otimes \omega^N ,H_{\phi_m} - H_{\phi_m}^M} \right|\right]^2}{(1+|m|^2)^\beta} \right)^{1/2} \\
	&\leq \left( \sum_{m \in \Lambda}|\hat{\phi}_m|^2 (1+|m|^2)^\beta \right)^{1/2} 
	\left(\sum_{\substack{m \in \Lambda\\m \notin \Lambda_M}} \frac{\mathbb{E} \left[ \brak{\omega^N \otimes \omega^N ,H_{\phi_m}}^2\right]}{(1+|m|^2)^\beta} \right)^{1/2} \\
	&\leq C \|\phi\|_{H^\beta} \left(\sum_{\substack{m \in \Lambda\\m \notin \Lambda_M}} \frac{|m|^4}{(1+|m|^2)^\beta} \right)^{1/2} \to 0 \mbox{ as $M \to \infty$ for }\beta > 3.
	\end{align*}
	For the last term, \autoref{prop:Franco} implies the convergence in $L^2([0,T],L^2(\Omega))$
	\begin{equation*}
	\int_0^t \brak{\omega_s \otimes \omega_s, H^M_{\phi}} \,ds \to \int_0^t \brak{\omega_s \diamond \omega_s, H_{\phi}} \,ds
	\end{equation*}
	as long as we check that $H^M_\phi$ is an approximation of $H_\phi$ in the sense of \autoref{prop:sim}. 
	But this last property is easily implied by the definition of $H^M_{\phi}$ and \autoref{prop:HphiL2}. 
	The second term in the right-hand side goes to zero as $N \to \infty$ for every fixed $M$,
	since $\omega^N \otimes \omega^N$ converges almost surely to $\omega \otimes \omega$ in $C([0,T],\H^{-2-2\delta}(D\times D))$,
	and $H^M_\phi$ belongs to $C^\infty(D\times D)$. Thus, up to subsequences, we have the almost sure convergence
	\begin{equation*}
	\int_0^t \brak{\omega_s^N \otimes \omega_s^N, H_{\phi}} \,ds \to \int_0^t\brak{\omega_s \diamond \omega_s, H_{\phi}} \,ds.
	\end{equation*} 
	Therefore, taking the almost sure limit in \eqref{eq:004} we get
	\begin{equation*}
	\brak{\omega_t,\phi} = \brak{\omega_0,\phi} + \int_0^t\brak{\omega_s \diamond \omega_s, H_{\phi}} \,ds. 
	+ \int_0^t \brak{L\omega_s , {\phi}} \,ds.\qedhere
	\end{equation*}
\end{proof}

\bibliographystyle{plain}

\end{document}